\documentclass[11pt,reqno]{amsart}
\usepackage{graphicx}
\usepackage{float}
\usepackage{amsfonts}
\usepackage[caption = false]{subfig}
\usepackage{amsmath}
\usepackage{graphics}
\usepackage{float}
\usepackage{enumerate,comment}
\usepackage{mathtools}
\usepackage{amsthm}
\usepackage[english]{babel}
\usepackage{amsfonts,amssymb}
\usepackage{mathrsfs}
\usepackage[utf8x]{inputenc}\usepackage[english]{babel}
\usepackage{soul}
\usepackage[all]{xy,xypic}
\usepackage{cite}
\usepackage{relsize}
\numberwithin{equation}{section}
\textwidth=465pt \evensidemargin=0pt \oddsidemargin=0pt
\marginparsep=8pt \marginparpush=8pt \textheight=650pt
\topmargin=-25pt

\setlength{\parskip}{2pt}

\newtheorem{theorem}{Theorem}[section]

\newtheorem{lemma}[theorem]{Lemma}

\theoremstyle{definition}

\theoremstyle{remark}

\numberwithin{equation}{section}
\DeclareMathOperator{\RE}{Re}

\begin{document}
	\title[On Hankel Determinants related with Bean shaped domain]{On estimation of Hankel determinants for certain class of starlike functions}

	\author[S. S. Kumar]{S. Sivaprasad Kumar}
	\address{Department of Applied Mathematics, Delhi Technological University, Delhi--110042, India}
	\email{spkumar@dce.ac.in}
	
	\author[N. Verma]{Neha Verma}
	\address{Department of Applied Mathematics, Delhi Technological University, Delhi--110042, India}
	\email{nehaverma1480@gmail.com}

	\subjclass[2010]{30C45, 30C50}
	
	\keywords{Coefficient problems, Bean-shaped domain, Starlike functions, Hankel determinants}
	\maketitle
\begin{abstract}
In the present study, we consider two subclasses starlike and convex functions, denoted by $\mathcal{S}_{\mathcal{B}}^{*}$ and $\mathcal{C}_{\mathcal{B}}$ respectively, associated with a bean-shaped domain. Further, we estimate certain sharp initial coefficients, as well as second, third and fourth-order Hankel determinants for functions belonging to the class $\mathcal{S}_{\mathcal{B}}^{*}$. Additionally, we compute sharp second and third-order Hankel determinants for functions belonging to the $\mathcal{C}_{\mathcal{B}}$ class.

\end{abstract}
\maketitle
	
\section{Introduction}
	\label{intro}
\noindent Let $\mathcal{A}$ denote the class of normalized analytic functions defined on the open unit disk $\mathbb{D}:=\{z\in \mathbb{C}:|z|<1\}$, having the form
\begin{equation}
f(z) = z + \sum_{n=2}^{\infty}a_nz^n\label{form}
\end{equation}
and suppose $\mathcal{S}$ be a subclass of $\mathcal{A}$ comprising univalent functions. Consider $\mathcal{P}$ to be the class of analytic functions defined on $\mathbb{D}$ with a positive real part, expressed as $p(z)=1+\sum_{n=1}^{\infty}p_n z^n$. Suppose $h$ and $g$ are two analytic functions, we say $h$ is subordinate to $g$, symbolically $h\prec g$, if there exists a Schwarz function $w$ with $w(0)=0$ and $|w(z)|\leq |z|$ such that $h(z)=g(w(z))$. A substantial body of literature exists on coefficient problems, ranging from the seminal Bieberbach's conjecture of 1916 to contemporary research (see \cite{goodman vol1}). There are two prominent subclasses of $\mathcal{S}$ consisting of starlike and convex functions, respectively denoted by $\mathcal{S}^{*}$ and $\mathcal{C}$. Further, in 1992, Ma and Minda \cite {ma-minda} unfified various subclasses of $\mathcal{S}^{*}$ and $\mathcal{C}$ by introducing the following two classes:
\begin{equation}
		\mathcal{S}^{*}(\varphi)=\bigg\{f\in \mathcal {A}:\dfrac{zf'(z)}{f(z)}\prec \varphi(z) \bigg\}\label{mindaclass}
\end{equation}
and 
\begin{equation}
		\mathcal{C}(\varphi)=\bigg\{f\in \mathcal {A}:1+\dfrac{zf''(z)}{f'(z)}\prec \varphi(z) \bigg\}\label{mindaclassc}
\end{equation}
where $\varphi$ is an analytic univalent function satisfying the conditions $\RE\varphi(z)>0$, $\varphi(\mathbb{D})$ symmetric about the real axis and starlike with respect to $\varphi(0)=1$ with $\varphi'(0)>0$. Recently, many Ma-Minda classes are introduced and studied by several authors by appropriately choosing $\varphi(z)$ in \eqref{mindaclass}. See the various Ma-Minda subclasses of starlike functions listed in the first column with the corresponding choice of $\varphi(z)$ in second column of Table \ref{7 table1}.

The concept of Hankel determinants, introduced in 1966 (see \cite{pomi}), continues to be a topic of significant interest for researchers today. The definition of the $q{th}$ Hankel determinant $H_{q}(n)$ of analytic functions $f \in \mathcal{A}$, under the assumption that $a_1:=1$, is as follows:
\begin{equation}
		H_{q}(n) =\begin{vmatrix}
			a_n&a_{n+1}& \ldots &a_{n+q-1}\\
			a_{n+1}&a_{n+2}&\ldots &a_{n+q}\\
			\vdots& \vdots &\ddots &\vdots\\
			a_{n+q-1}&a_{n+q}&\ldots &a_{n+2q-2}\label{5hqn}
		\end{vmatrix}, \quad n,q\in \mathbb{N}.
\end{equation}
The expressions for the second and third-order Hankel determinants for specific values of $q$ and $n$, are denoted by $H_{2}(3)$ and $H_{3}(1)$, respectively, given by
\begin{equation}
    H_{2}(3):=a_3a_5-a_4^2,\label{3h23}
\end{equation}
and
\begin{equation}
H_{3}(1):=2 a_2a_3a_4-a_3^3-a_4^2-a_2^2a_5+a_3a_5.\label{1h3}
\end{equation}
\begin{table}\label{7 table1}
\caption{\centering List of sharp third order Hankel determinants}
\centering
\begin{tabular}{|c|c| c |c|} 
 \hline
 Class& $\varphi(z)$ & Sharp $|H_3(1)|$ & Reference \\ [1ex] 
 \hline
  $\mathcal{S}^{*}$ & $(1+z)/(1-z)$  & $4/9$ &\cite{banga,4/9}    \\
 $\mathcal{S}^{*}_{\varrho}$ & $1+ze^z$ & 1/9 &\cite{nehacardioid}\\
  $\mathcal{SL}^{*}$ & $\sqrt{1+z}$ & 1/36 &\cite{sharp}\\
  $\mathcal{S}^{*}_{e}$ & $e^z$ & 1/9&\cite{nehaexpo} \\
  $\mathcal{S}^{*}_{\rho}$& $1+\sinh^{-1}(z)$ & 1/9&\cite{nehapetal} \\
  $\mathcal{S}^{*}_{\tau}$& $1+\arctan z$ & 1/9&\cite{nehastrip} \\
 \hline
\end{tabular}
\end{table}
\noindent Deriving a sharp bound for Hankel determinants is a formidable challenge, prompting numerous researchers to endeavor to do so for various subclasses of starlike functions, see \cite{alarif,sharp,bulboca,kanas1,bulboca2} and some are listed in third column of Table \ref{7 table1}. 

Recently, Kumar and Yadav \cite{poojabean}, by choosing $\varphi(z)=\sqrt{1+\tanh z}$, introduced and studied the Ma-Minda subclass of starlike functions $\mathcal{S}_{\mathcal{B}}^{*}$ associated with a bean-shaped domain, given by
\begin{equation*}
    \mathcal{S}_{\mathcal{B}}^{*}=\bigg\{f\in \mathcal {A}:\dfrac{zf'(z)}{f(z)}\prec \sqrt{1+\tanh z}\bigg\}.
\end{equation*}
Motivated by it, we introduce the following convex counterpart of the above class:
\begin{equation*}
   \mathcal{C}_{\mathcal{B}}=\bigg\{f\in \mathcal {A}:1+\dfrac{zf''(z)}{f'(z)}\prec \sqrt{1+\tanh z}\bigg\}. 
\end{equation*}
The authors have investigated the geometric properties of the univalent function $\sqrt{1+\tanh z}$, along with some inclusion and sharp radius results involving $\mathcal{S}_{\mathcal{B}}^{*}$, as well as implications of first-order differential subordination. This class can be further studied to know more about the behavior of the coefficients of functions belonging to this class, second and third order differential subordination,  etc. Thus, studying such subclasses of starlike and convex functions opens new avenues in the field of research. Taking this aspect in account, in our current investigation, we focus on coefficient-related problems concerning the aforementioned classes $\mathcal{S}_{\mathcal{B}}^{*}$ and $\mathcal{C}_{\mathcal{B}}$, which are yet not addressed in the literature. In fact, we are finding the sharp bounds of the initial coefficients, second and third-order Hankel determinants, as well as the bound of the fourth-order Hankel determinant for functions belonging to the class $\mathcal {S}^{*}_{\mathcal{B}}$. Additionally, we establish sharp bounds of the second and third-order Hankel determinants for functions belonging to the class $\mathcal{C}_{\mathcal{B}}$.

\section{Coefficient related problems for $\mathcal{S}^{*}_{\mathcal{B}}$}
In this section, we start by determining the sharp bounds of the initial coefficients $a_i$ for $(i=2,3,4,5)$ followed by establishing the sharp bounds of the second and third-order Hankel determinants for functions $f\in {S}^{*}_{\mathcal{B}}$. Subsequently, we derive the bounds for 
$a_6$ and $a_7$ to deduce a possible bound of the fourth-order Hankel determinant for functions $f\in {S}^{*}_{\mathcal{B}}$.
	
\subsection{Sharp initial coefficient bounds}

Let $f\in \mathcal{S}_{\mathcal{B}}^{*},$ then there exists a Schwarz function $w(z)$ such that
\begin{equation}\label{7 formulaai}
	\dfrac{zf'(z)}{f(z)}=\sqrt{1+\tanh w(z)}.
\end{equation}
Suppose that $p(z)=1+p_1z+p_2z^2+\cdots\in \mathcal{P}$ and consider $w(z)=(p(z)-1)/(p(z)+1)$. Further, by substituting the expansions of $w(z)$, $p(z)$ and $f(z)$ in equation (\ref{7 formulaai}) and then comparing the coefficients, we obtain the expressions of $a_i (i=2,3,...,7)$ in terms of $p_j (j=1,2,...,5)$, given as
\begin{equation}\label{7b sa2}
a_2=\dfrac{p_1}{4},\quad a_3=\dfrac{1}{64}\bigg(8p_2-3p_1^2\bigg), \quad a_4=\dfrac{1}{2304}\bigg(23p_1^3 -168p_1 p_2 + 192 p_3\bigg),
\end{equation}
\begin{equation}\label{7b sa5}
a_5=\dfrac{1}{18432}\bigg( -11p_1^4+528 p_1^2 p_2-576p_2^2 -1056 p_1 p_3+1152 p_4\bigg),
\end{equation}
 \begin{equation}\label{7b sa6}
a_6=\dfrac{1}{1843200}\bigg( 50880 p_1 p_2^2-2367 p_1^5 - 8560 p_1^3 p_2   + 46080 p_1^2 p_3-96000 p_2 p_3 - 86400 p_1 p_4 + 92160 p_5\bigg)
\end{equation}
and 
\begin{align}\label{7b sa7}
a_7&=\dfrac{1}{530841600}\bigg( 601421 p_1^6 - 2365320 p_1^4 p_2 - 4818240 p_1^2 p_2^2 + 4723200 p_2^3 +26150400 p_1 p_2 p_3\nonumber\\
 &\quad\quad\quad\quad\quad - 
 2648640 p_1^3 p_3 - 11980800 p_3^2 + 11577600 p_1^2 p_4 - 
 23500800 p_2 p_4 - 21012480 p_1 p_5\bigg).
\end{align}	
The results stated below are necessary for proving our main result.
\begin{lemma}\cite{rj}
Let $p\in \mathcal{P}$ be of the form $1+\sum_{n=1}^{\infty}p_nz^n.$ Then
\begin{equation}
        |p_1^4-3p_1^2p_2+p_2^2+2p_1p_3-p_4|\leq 2\label{p}
\end{equation}
and
\begin{equation}
       |p_3-2p_1p_2+p_1^3|\leq 2.\label{q}
    \end{equation}
    \end{lemma}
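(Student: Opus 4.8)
The plan is to reduce both inequalities to the single classical fact that every Carathéodory function has coefficients bounded by $2$ in modulus, i.e. $|p_n|\leq 2$ for $p\in\mathcal{P}$ and all $n$. The crucial observation is that the class $\mathcal{P}$ is closed under reciprocation: if $p(z)=1+\sum_{n\geq 1}p_n z^n\in\mathcal{P}$, then $p$ is zero-free on $\mathbb{D}$ (since $\RE p>0$), so $q:=1/p$ is analytic with $q(0)=1$, and $\RE q = \RE p/|p|^2 > 0$, whence $q\in\mathcal{P}$. Writing $q(z)=1+\sum_{n\geq 1}q_n z^n$, the bound $|q_n|\leq 2$ is then available for each $n$.

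First I would extract the coefficients $q_n$ in terms of the $p_n$ by expanding the identity $p(z)q(z)=1$ and equating coefficients. This gives the recursion $\sum_{k=0}^{n}p_k q_{n-k}=0$ for $n\geq 1$ (with $p_0=q_0=1$), which solves successively to
\begin{align*}
q_1&=-p_1,\\
q_2&=p_1^2-p_2,\\
q_3&=-p_1^3+2p_1p_2-p_3,\\
q_4&=p_1^4-3p_1^2p_2+p_2^2+2p_1p_3-p_4.
\end{align*}

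The key step is then simply to recognize the two functionals of the Lemma inside this list. Indeed $p_3-2p_1p_2+p_1^3=-q_3$, so that $|p_3-2p_1p_2+p_1^3|=|q_3|\leq 2$, which is the second inequality. Likewise $p_1^4-3p_1^2p_2+p_2^2+2p_1p_3-p_4=q_4$, so the first inequality is exactly the statement $|q_4|\leq 2$. Both conclusions then follow at once from $q\in\mathcal{P}$ together with the classical coefficient estimate, with no optimization required.

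Since the whole argument rests on a known closure property and the elementary coefficient bound, I expect essentially no hard analytic obstacle; the only delicate point is the bookkeeping in the Cauchy product, namely getting the signs in $q_3$ and $q_4$ correct, since a sign slip there would silently break the identification of the functionals. An alternative route would be the Libera--Z\l otkiewicz parametrization expressing $p_2,p_3,p_4$ through $p_1$ and auxiliary disk variables followed by a direct maximization; I anticipate this being far more laborious and would fall back on it only if the reciprocal identity failed to capture one of the two functionals exactly.
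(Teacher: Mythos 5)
Your argument is correct and complete. The paper itself offers no proof of this lemma --- it is simply cited to Libera and Z\l otkiewicz --- so the relevant check is whether your reduction stands on its own, and it does: since $\RE p>0$ forces $p\neq 0$ on $\mathbb{D}$ and $\RE(1/p)=\RE p/|p|^2>0$ with $(1/p)(0)=1$, the reciprocal $q=1/p$ lies in $\mathcal{P}$, and the Cauchy-product recursion $\sum_{k=0}^n p_kq_{n-k}=0$ does yield $q_3=-(p_1^3-2p_1p_2+p_3)$ and $q_4=p_1^4-3p_1^2p_2+p_2^2+2p_1p_3-p_4$ (I verified both expansions, including the signs you flagged as the delicate point), so both bounds are instances of Carath\'eodory's estimate $|q_n|\leq 2$. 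This is in fact the standard route to these two inequalities, and your writeup supplies a self-contained proof where the paper merely gives a reference; the fallback you mention (the Libera--Z\l otkiewicz parametrization of $p_2,p_3,p_4$, which the paper records separately as Lemma \ref{pformula}) is indeed unnecessary here.
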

\begin{lemma}\cite{ma-minda}
Let $p\in \mathcal{P}$ be of the form $1+\sum_{n=1}^{\infty}p_nz^n.$ Then
\begin{align*}
      |p_2-\beta p_1^2|\leq \begin{cases} 2-4\beta,& \beta\leq 0;\\
      2, & 0\leq \beta\leq 1;\\
      4\beta-2, & \beta\geq 1
      \end{cases}
\end{align*}
when $\beta<0$ or $\beta>1,$ the equality holds if and only if $p(z)=(1+z)/(1-z)$ or one of its rotations. If $0<\beta<1,$ then the inequality holds if and only if $p(z)=(1+z^2)/(1-z^2)$ or one of its rotations. If $\beta=0,$ the equality holds if and only if $p(z)=(1+\eta)(1+z)/(2(1-z))+(1-\eta)(1-z)/(2(1+z))(0\leq \eta\leq 1)$ or one of its rotations. If $\beta=1,$ the equality holds if and only if p is the reciprocal of one of the functions such that the equality holds in case of $\beta=0.$ Though the above upper bound is sharp for $0<\beta<1,$ still it can be improved as follows:
\begin{equation}
      |p_2-\beta p_1^2|+\beta|p_1|^2\leq 2 \quad (0<\beta\leq 1/2)\label{use}
\end{equation}
and 
\begin{equation*}
      |p_2-\beta p_1^2|+(1-\beta)|p_1|^2\leq 2 \quad (1/2<\beta\leq 1).
\end{equation*}
\end{lemma}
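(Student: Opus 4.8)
The plan is to reduce everything to the classical structural description of the Carath\'eodory coefficients and then optimize in a single real variable. First I would use the rotational invariance of $\mathcal{P}$: replacing $p(z)$ by $p(e^{-i\theta}z)$ keeps $p\in\mathcal{P}$ while sending $p_1\mapsto e^{-i\theta}p_1$ and $p_2\mapsto e^{-2i\theta}p_2$, so that $p_2-\beta p_1^2\mapsto e^{-2i\theta}(p_2-\beta p_1^2)$; thus both $|p_2-\beta p_1^2|$ and $|p_1|^2$ are invariant, and I may assume $p_1=p\in[0,2]$ is real and nonnegative, the bound $p\le 2$ being the elementary Carath\'eodory estimate $|p_1|\le 2$.

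The key tool is the standard representation
\[
2p_2=p_1^2+(4-p_1^2)x,\qquad |x|\le 1,
\]
valid for every $p\in\mathcal{P}$. Substituting this together with $p_1=p$ gives
\[
p_2-\beta p_1^2=\Bigl(\tfrac12-\beta\Bigr)p^2+\tfrac{x}{2}\,(4-p^2),
\]
so by the triangle inequality and $|x|\le 1$,
\[
|p_2-\beta p_1^2|\le \Bigl|\tfrac12-\beta\Bigr|\,p^2+\tfrac12(4-p^2)=:\Phi(p).
\]

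Now $\Phi$ is affine in $p^2$ with slope $|\tfrac12-\beta|-\tfrac12$, so over $p\in[0,2]$ its maximum is attained at an endpoint. For $\beta\le 0$ and for $\beta\ge 1$ the slope is nonnegative and the maximum is $\Phi(2)=4|\tfrac12-\beta|$, which equals $2-4\beta$ and $4\beta-2$ respectively; for $0\le\beta\le 1$ the slope is nonpositive and the maximum is $\Phi(0)=2$. This yields the three claimed cases at once. For the refined inequalities I would instead retain $\Phi(p)$ rather than maximize it: when $0<\beta\le\tfrac12$ one has $\Phi(p)=2-\beta p^2$, so $|p_2-\beta p_1^2|+\beta|p_1|^2\le\Phi(p)+\beta p^2=2$; when $\tfrac12<\beta\le 1$ one has $\Phi(p)=2+(\beta-1)p^2$, so $|p_2-\beta p_1^2|+(1-\beta)|p_1|^2\le 2$.

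The remaining and most delicate step is to identify the equality cases by tracing the two inequalities backward. Equality in the triangle inequality forces $x$ to align in phase with the term $(\tfrac12-\beta)p^2$, while equality in the endpoint optimization forces $p$ to take the extremal endpoint. For $\beta<0$ or $\beta>1$ this means $p=2$, i.e.\ $p_1=2$, identifying the extremal function as $(1+z)/(1-z)$ and its rotations; for $0<\beta<1$ it means $p=0$ with $|x|=1$, giving $p(z)=(1+z^2)/(1-z^2)$ and its rotations. The boundary values $\beta=0$ and $\beta=1$ are subtler: there the slope of $\Phi$ vanishes, every $p\in[0,2]$ is optimal, and the extremal family degenerates to the convex combinations of $(1+z)/(1-z)$ and $(1-z)/(1+z)$ stated in the lemma (the $\beta=1$ extremals being the reciprocals of the $\beta=0$ ones). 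I expect this bookkeeping of the degenerate extremal families, rather than the one-variable optimization itself, to be the main obstacle.
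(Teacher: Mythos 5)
This lemma is imported verbatim from Ma--Minda and the paper offers no proof of it, so the only meaningful comparison is with the classical argument, which is essentially what you give. Your reduction is correct: rotation invariance to make $p_1=p\in[0,2]$, the Libera--Z\l otkiewicz representation $2p_2=p_1^2+(4-p_1^2)x$ with $|x|\le 1$ (this is exactly \eqref{b2} of the paper), the triangle inequality giving $\Phi(p)=|\tfrac12-\beta|p^2+\tfrac12(4-p^2)$, and the endpoint analysis of an affine function of $p^2$ reproduce all three cases, and your derivation of the refined inequalities \eqref{use} and its companion by \emph{not} maximizing $\Phi$ is clean and correct. The one place that still needs to be written out is the equality analysis: for $0<\beta<1$ you must justify that $p_1=0$ together with $|p_2|=2$ forces $p(z)=(1+z^2)/(1-z^2)$ up to rotation (via the representation of $p$ as a convex combination of kernels $\frac{1+\epsilon z}{1-\epsilon z}$ with $\epsilon^2$ fixed of modulus one, where $\sum t_j\epsilon_j=0$ forces $t_1=t_2=\tfrac12$), and for $\beta=0,1$ that $|p_2|=2$ alone yields the stated one-parameter family; you correctly flag this bookkeeping but do not carry it out. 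Nothing in your argument would fail, so this is a completeness issue rather than a gap in the method.
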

Also, we recall that 
\begin{equation}
    \max_{0\leq t\leq 4}(At^2+Bt+C)=
    \begin{cases}
    C,& B\leq0, A\leq \frac{-B}{4};\\
    16A+4B+C, & B\geq0, A\geq \frac{-B}{8}\quad \text{or}\quad B\leq 0, A\geq \frac{-B}{4};\\
    \dfrac{4AC-B^2}{4A},& B>0, A\leq \frac{-B}{8}.\label{l}
    \end{cases}
\end{equation}

\begin{theorem}\label{7 initialbound}
If $f\in\mathcal{S^{*}_{\mathcal{B}}}$, then $(i)$ $|a_2|\leq1/2$, $(ii)$ $|a_3|\leq1/4$, $(iii)$ $|a_4|\leq1/6$ and $(iv)$ $|a_5|\leq847/3216\simeq 0.263371\cdots$. These bounds are sharp.
\end{theorem}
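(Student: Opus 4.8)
The plan is to route every estimate through the Carath\'eodory coefficients $p_j$ via the representation \eqref{7b sa2}--\eqref{7b sa5} and then apply the standard toolkit for the class $\mathcal{P}$. Parts $(i)$ and $(ii)$ are essentially immediate. Since $a_2=p_1/4$ and $|p_1|\le 2$, part $(i)$ follows. For $(ii)$ I would write $a_3=\tfrac18\bigl(p_2-\tfrac38 p_1^2\bigr)$ and apply the sharp Fekete--Szeg\H{o} bound of Ma and Minda with $\beta=3/8\in(0,1)$, giving $|p_2-\tfrac38 p_1^2|\le 2$ and hence $|a_3|\le 1/4$; note that a naive triangle inequality on $8p_2-3p_1^2$ overshoots, so the Ma--Minda refinement is indispensable here.

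For $(iii)$ and $(iv)$ the functionals are no longer of Fekete--Szeg\H{o} type, so I would invoke the Libera--Zlotkiewicz representation. Using the rotational invariance of $\mathcal{P}$ (each of the relevant functionals is homogeneous of weight $3$, respectively $4$, in the $p_j$) I may assume $p_1=c\in[0,2]$, and then substitute $2p_2=c^2+(4-c^2)x$ and $4p_3=c^3+2c(4-c^2)x-c(4-c^2)x^2+2(4-c^2)(1-|x|^2)y$ together with the corresponding formula for $p_4$, where $|x|,|y|\le 1$. For $(iii)$, rewriting $a_4=\tfrac1{12}\bigl(p_3-\tfrac78 p_1p_2+\tfrac{23}{192}p_1^3\bigr)$ and substituting reduces matters to maximizing a polynomial in $(c,|x|)\in[0,2]\times[0,1]$ once the $y$-term is bounded by $1$; collecting powers makes the expression quadratic in $t=c^2\in[0,4]$, so \eqref{l} locates the maximum and yields $|a_4|\le 1/6$.

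The genuinely hard part is $(iv)$. After substituting the parametrizations for $p_2,p_3,p_4$ into \eqref{7b sa5}, taking moduli term by term, and discarding the innermost free parameters by bounding them by $1$, one is left with a two-variable maximization over $(c,|x|)\in[0,2]\times[0,1]$ with rather unwieldy coefficients. My strategy would be to freeze $|x|$, view the result as a quadratic in $t=c^2$, apply \eqref{l} to reduce to a single-variable function of $|x|$, and optimize that. The principal obstacle is the bookkeeping: the maximizer sits in the interior of the parameter region---the boundary choices $c=0$ and $c=2$ only produce the values $0$ and $1/8$---so several competing branches of \eqref{l} must be compared, and it is this interior critical point that accounts for the arithmetically awkward value $847/3216$.

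Finally, for sharpness I would display explicit extremizers. Parts $(i)$--$(iii)$ are attained on the Schwarz data $w(z)=z$, $z^2$, $z^3$, equivalently $p(z)=(1+z)/(1-z)$, $(1+z^2)/(1-z^2)$, $(1+z^3)/(1-z^3)$, which annihilate all but a single $p_j$ and return $|a_2|=1/2$, $|a_3|=1/4$, $|a_4|=1/6$; this matches the equality case $p(z)=(1+z^2)/(1-z^2)$ flagged in the Ma--Minda lemma for $(ii)$. For $(iv)$ the extremal $p$ must have $p_1\neq 0$, so I would read off the optimal pair $(c,x)$ from the maximization above, reconstruct the corresponding $p\in\mathcal{P}$, hence the Schwarz function $w$ and the function $f\in\mathcal{S}^{*}_{\mathcal{B}}$, and verify directly that it realizes $|a_5|=847/3216$, confirming sharpness.
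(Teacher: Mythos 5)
Parts $(i)$ and $(ii)$ of your plan coincide with the paper's argument. For $(iii)$ you diverge: the paper does not parametrize $p_2,p_3$ at all, but instead compares coefficients of $z^4$ in $zf'(z)=\sqrt{1+\tanh w(z)}\,f(z)$ to get $6a_4=w_3+\tfrac14 w_1w_2-\tfrac{13}{48}w_1^3$ in the Schwarz coefficients and quotes Prokhorov--Szynal to conclude $|6a_4|\le 1$ in one line. Your route via Libera--Z\l otkiewicz can be made to work (the functional is $\tfrac1{12}|p_3-2\mu p_1p_2+\nu p_1^3|$ with $(\mu,\nu)=(7/16,23/192)$, which lies in the known region where this is at most $2$), but your mechanism is misdescribed: after taking moduli the majorant contains odd powers of $c$, so for fixed $|x|$ it is not a quadratic in $t=c^2$ and \eqref{l} does not apply as stated. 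This part is repairable but not yet a proof.

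The genuine gap is in $(iv)$. The constant $847/3216$ is not the output of a generic ``substitute \eqref{b2}--\eqref{b4}, apply the triangle inequality term by term, and maximize over $(c,|x|)$'' scheme; it is produced by a very specific algebraic decomposition that you have not identified. The paper writes $a_5=\tfrac1{16}\bigl(-\tfrac12P+\tfrac1{12}p_1Q-\tfrac78p_1^2R+\tfrac12p_4\bigr)$ with $P=p_1^4-3p_1^2p_2+p_2^2+2p_1p_3-p_4$, $Q=p_3-2p_1p_2+p_1^3$ and $R=p_2-\tfrac{67}{144}p_1^2$, then uses the sharp bounds $|P|\le2$, $|Q|\le2$ from \eqref{p}--\eqref{q} together with the \emph{refined} Ma--Minda inequality \eqref{use}, $|R|\le 2-\tfrac{67}{144}|p_1|^2$, which is what creates the negative quartic term in $\tfrac73+\tfrac74|p_1|^2-\tfrac{469}{1152}|p_1|^4$; only then does \eqref{l} (with $A=-469/1152$, $B=7/4$, $C=0$, interior maximum $126/67$) deliver $847/3216$. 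Your relaxation --- bounding the innermost Libera--Z\l otkiewicz parameters by $1$ and maximizing the resulting two-variable majorant --- is a different upper envelope, and there is no reason (and no computation offered) that its maximum equals $847/3216$ rather than some larger number. Your sharpness plan for $(iv)$ inherits the same problem: you propose to ``read off the optimal pair $(c,x)$ and reconstruct $p\in\mathcal P$,'' but the extremal configuration of a chain of triangle inequalities need not be realized by an admissible Carath\'eodory function (indeed even the paper's stated equality data has $p_2=(737+\sqrt{963326})/402>2$, so this reconstruction step is delicate at best). As it stands, $(iv)$ is a strategy, not a proof of the stated bound.
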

\begin{proof}
 $(i)$ Since $|p_n|\leq2$ for $n\geq1$, therefore, from \eqref{7b sa2}, $|a_2|\leq1/2.$\\
$(ii)$ For $a_3$, we use the inequality $|p_2-\mu p_1^2| \leq 2\max\{1, |2\mu-1|\}$ given by Ma and
Minda \cite{ma-minda}, which yields $|a_3| \leq 1/4.$\\
$(iii)$ For $a_4$, (\ref{7 formulaai}) is re-written as:
\begin{equation}
		zf'(z)=\sqrt{1+\tanh (w(z))}f(z). \label{7 re}
\end{equation}
On substituting $f(z)=z+\sum_{n=2}^{n=\infty}a_nz^n$ and $w(z)=\sum_{k=1}^{\infty}w_{k}z^k$ in (\ref{7 re}) and comparing the coefficients of $z^4$, we get $$6a_4=\left(w_3+\dfrac{1}{4}w_1w_2-\dfrac{13}{48}w_1^3\right).$$
Now using \cite[Lemma 2, p.128]{poko}, we deduce that $|6a_4|\leq1$ and hence the result follows.\\
$(iv)$ From \eqref{7b sa5}, we get the expression of $a_5$ as
\begin{align*}
    a_5&=\dfrac{1}{16}\bigg(-\dfrac{11}{1152}p_1^4+\dfrac{11}{24}p_1^2p_2-\dfrac{p_2^2}{2}-\dfrac{11}{12}p_1p_3+p_4\bigg)\\
    &=\dfrac{1}{16}\bigg(-\dfrac{1}{2}P+\dfrac{1}{12}p_1Q-\dfrac{7}{8}p_1^2R+\dfrac{1}{2}p_4\bigg),
   \end{align*}
which further gives
\begin{equation*}
    |a_5|\leq \dfrac{1}{16}\left(\dfrac{1}{2}|P|+\dfrac{1}{12}|p_1||Q|+\dfrac{7}{8}|p_1|^2|R|+\dfrac{1}{2}|p_4|\right),
\end{equation*}
where $P=p_1^4-3p_1^2p_2+p_2^2+2p_1p_3-p_4,$ $Q=p_3-2p_1p_2+p_1^3$ and $R=p_2-(67/144)p_1^2$. Moreover, using the bounds of $|P|\leq 2$ from (\ref{p}), $|Q|\leq 2$ from  (\ref{q}) and $|R|\leq 2$ from (\ref{use}), respectively, we obtain
   $$|a_5|\leq\dfrac{1}{16}\left(\dfrac{7}{3}+\dfrac{7}{4}|p_1|^2-\dfrac{469}{1152}|p_1|^4\right).$$ 
Now, we obtain $|7|p_1|^2/4-469|p_1|^4/1152|\leq 126/67$ using (\ref{l}) by taking $A=-469/1152$, $B=7/4$ and $C=0,$ which leads to the desired estimate for $|a_5|$.  The sharpness of the result can be witnessed when $p_1=p_4=2$, $p_2=(737+\sqrt{963326})/402$ and $p_3=-2$.  
   The function 
	\begin{equation*}
	 f_n(z)=z\exp\bigg(\int_{0}^{z}\frac{\sqrt{1+\tanh (t^{n-1})}-1}{t}dt\bigg)  
	\end{equation*}
acts as the extremal function for the initial coefficients $a_n$ for $n=2,3$ and $4$.
\end{proof}
The formula for $p_j$ $(j=2,3,4)$, which is included in the Lemma \ref{pformula} below, plays a vital role in establishing the sharp bounds for Hankel determinants and forms the foundation for our main results. 
\begin{lemma}\cite{rj,lemma1}\label{pformula}
Let $p\in \mathcal {P}$ has the form $1+\sum_{n=1}^{\infty}p_n z^n.$ Then for some $\gamma$, $\eta$ and $\rho$ such that $|\gamma|\leq 1$, $|\eta|\leq 1$ and $|\rho|\leq 1$, we have
\begin{equation}
2p_2=p_1^2+\gamma (4-p_1^2),\label{b2}
\end{equation}
\begin{equation}
4p_3=p_1^3+2p_1(4-p_1^2)\gamma -p_1(4-p_1^2) {\gamma}^2+2(4-p_1^2)(1-|\gamma|^2)\eta, \label{b3}
\end{equation}
and \begin{equation}
8p_4=p_1^4+(4-p_1^2)\gamma (p_1^2({\gamma}^2-3\gamma+3)+4\gamma)-4(4-p_1^2)(1-|\gamma|^2)(p_1(\gamma-1)\eta+\bar{\gamma}{\eta}^2-(1-|\eta|^2)\rho). \label{b4}
\end{equation}

\end{lemma}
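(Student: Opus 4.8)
The plan is to recognize this as the classical Carath\'eodory parametrization of the coefficients of a function with positive real part, in the form used by Libera and Z{\l}otkiewicz, and to derive it through the Schur algorithm applied to the associated Schwarz function. Throughout I would normalize $p_1\in[0,2]$. This is legitimate because the substitution $p(z)\mapsto p(e^{i\theta}z)$ (together with conjugation) acts on coefficients by $p_n\mapsto p_n e^{in\theta}$, so after a rotation $p_1$ may be taken real and nonnegative; this is precisely the regime in which the right-hand sides of \eqref{b2}--\eqref{b4} are written, with $p_1^2$ rather than $|p_1|^2$ and with the first Schur parameter folded into $p_1$.

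First I would pass from $p\in\mathcal P$ to the Schwarz function $w=(p-1)/(p+1)$, which satisfies $w(0)=0$ and $w(\mathbb D)\subseteq\overline{\mathbb D}$. Writing $w(z)=\sum_{k\ge1}w_k z^k$ and expanding the geometric series in $w=\tfrac{N}{2}(1+N/2)^{-1}$ with $N=\sum_{n\ge1}p_n z^n$, comparison of coefficients gives $w_1,w_2,w_3,w_4$ as explicit polynomials in $p_1,\dots,p_4$ (for instance $w_1=p_1/2$ and $w_2=\tfrac12 p_2-\tfrac14 p_1^2$). These are the quantities I ultimately want to rewrite in terms of the free parameters.

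Next I would run the Schur algorithm on $w$. Since $w(0)=0$, set $\omega_0(z)=w(z)/z$, which maps $\mathbb D$ into $\overline{\mathbb D}$ by the Schwarz lemma, and define recursively
\[
\omega_{k+1}(z)=\frac1z\cdot\frac{\omega_k(z)-\omega_k(0)}{1-\overline{\omega_k(0)}\,\omega_k(z)},
\]
each $\omega_{k+1}$ again being a self-map of $\overline{\mathbb D}$ by the Schwarz--Pick lemma. The Schur parameters $\gamma_0:=\omega_0(0)=p_1/2$, $\gamma:=\omega_1(0)$, $\eta:=\omega_2(0)$, $\rho:=\omega_3(0)$ then all lie in $\overline{\mathbb D}$, which is the source of the constraints $|\gamma|,|\eta|,|\rho|\le1$. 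Unwinding one step of the recursion at $z=0$ expresses each Taylor coefficient of $w$ in terms of these parameters; the denominator $1-\overline{\omega_k(0)}\,\omega_k$ contributes a factor $1-|\gamma_k|^2$ at each stage, producing exactly the terms $1-|\gamma|^2$ and $1-|\eta|^2$ of the statement. Concretely one finds $w_2=(1-|\gamma_0|^2)\gamma$ and $w_3=(1-|\gamma_0|^2)\big[(1-|\gamma|^2)\eta-\overline{\gamma_0}\,\gamma^2\big]$. Substituting $1-|\gamma_0|^2=(4-p_1^2)/4$ and equating with the polynomial expressions for $w_2,w_3$ yields \eqref{b2} and \eqref{b3} after routine algebra.

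The main obstacle is \eqref{b4}. Obtaining it requires carrying the recursion one step further, to the parameter $\rho$, which means expanding the compound rational expression for $\omega_2$ to first order and $\omega_1$ to second order; the bookkeeping is heavy and cross-terms proliferate. Particular care is needed with the complex conjugates: the factor $\overline{\gamma_0}$ collapses to a real multiple of $p_1$ under the normalization, but the genuinely non-holomorphic term $\bar{\gamma}\,\eta^2$ survives and must be tracked correctly. Finally I would dispose of the degenerate cases in which the algorithm terminates --- $|p_1|=2$ (so $4-p_1^2=0$) or $|\gamma_k|=1$ for some $k$ --- where a denominator vanishes and $p$ reduces to a finite convex combination of maps $(1+xz)/(1-xz)$; in these cases the identities hold by continuity (or may be verified directly), which completes the proof.
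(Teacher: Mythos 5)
The paper offers no proof of this lemma at all---it is quoted verbatim from Libera--Z{\l}otkiewicz (for \eqref{b2}, \eqref{b3}) and Kwon--Lecko--Sim (for \eqref{b4}), where it is obtained from the Carath\'eodory--Toeplitz positivity conditions on the coefficient body of $\mathcal{P}$. Your derivation via the Schur algorithm on the Schwarz function $w=(p-1)/(p+1)$ is a genuinely different and correct route: I checked that your formulas $w_1=p_1/2$, $w_2=(1-|\gamma_0|^2)\gamma$ and $w_3=(1-|\gamma_0|^2)\bigl[(1-|\gamma|^2)\eta-\overline{\gamma_0}\gamma^2\bigr]$ reproduce \eqref{b2} and \eqref{b3} exactly, and one further step gives $w_4=(1-|\gamma_0|^2)(1-|\gamma|^2)\bigl[(1-|\eta|^2)\rho-\bar{\gamma}\eta^2\bigr]-\overline{\gamma_0}(1-|\gamma_0|^2)\bigl[2\gamma(1-|\gamma|^2)\eta-\overline{\gamma_0}\gamma^3\bigr]$, whose $\rho$, $\bar\gamma\eta^2$ and $\eta$ terms I verified against \eqref{b4}; the remaining $\gamma$-polynomial part is routine. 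The Schur route buys a uniform mechanism that explains where the factors $(1-|\gamma|^2)$ and $(1-|\eta|^2)$ come from, at the price of the rational-function bookkeeping you acknowledge for \eqref{b4}, which you describe but do not actually carry out---that is the only thin spot. Two small remarks: you are right that the normalization $p_1\in[0,2]$ is essential for the formulas to read $4-p_1^2$ rather than $4-|p_1|^2$ (the paper assumes this silently when it invokes rotation invariance before applying the lemma), though conjugation is not needed for it, only rotation; and your treatment of the degenerate cases $|\gamma_k|=1$ by continuity is the correct way to keep the statement valid for all of $\mathcal{P}$.
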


\subsection{Sharp Hankel determinants for $\mathcal{S}^{*}_{\mathcal{B}}$}	

The following theorem presents the sharp bound for $|H_{3}(1)|$ for functions belonging to the class $\mathcal{S}^{*}_{\mathcal{B}}$.

\begin{theorem}\label{7b1 ssharph31}
Let $f\in \mathcal {S}^{*}_{\mathcal{B}}$, then 
\begin{equation}
|H_{3}(1)|\leq 1/36.\label{7b1 s9.5}
\end{equation}
This result is sharp.
\end{theorem}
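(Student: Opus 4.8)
The plan is to reduce the bound to an elementary, if laborious, optimization over the coefficient body of the class $\mathcal{P}$. Writing $w=(p-1)/(p+1)$ with $p\in\mathcal{P}$ as in the setup preceding \eqref{7b sa2}, I would substitute the expressions \eqref{7b sa2}--\eqref{7b sa5} for $a_2,a_3,a_4,a_5$ into the definition \eqref{1h3} of $H_3(1)$, obtaining a single polynomial in $p_1,p_2,p_3,p_4$. Since $\mathcal{S}_{\mathcal{B}}^{*}$ is invariant under the rotations $f(z)\mapsto e^{-i\theta}f(e^{i\theta}z)$, which act on the coefficients by $p_n\mapsto e^{in\theta}p_n$, there is no loss of generality in assuming $p_1=c\in[0,2]$.

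Next I would feed in the parametrization of Lemma \ref{pformula}, replacing $p_2,p_3,p_4$ by their expressions in $c$ and the three disk parameters $\gamma,\eta,\rho$ with $|\gamma|,|\eta|,|\rho|\le 1$. The parameter $\rho$ enters only through $p_4$, hence only through the combination $(a_3-a_2^2)a_5$, and it does so affinely; the triangle inequality then lets me bound its contribution by setting $|\rho|=1$. Treating the phases of $\gamma$ and $\eta$ in the same termwise fashion and writing $x=|\gamma|\in[0,1]$, $y=|\eta|\in[0,1]$, the estimate reduces to showing that an explicit real polynomial $F(c,x,y)$ satisfies $F\le 1/36$ on the box $[0,2]\times[0,1]\times[0,1]$.

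The heart of the argument, and its main obstacle, is this final maximization: $F$ carries no sign-definite structure, and direct evaluation at the corners (for instance $c=0$, or $c=2$, where $p(z)=(1+z)/(1-z)$) yields values strictly below $1/36$, so the extremum is attained in the interior or on an edge and must be located analytically. I would fix $c$ and first optimize in $(x,y)$ by comparing the interior critical points of $F$ with its restrictions to the faces $x,y\in\{0,1\}$; after inserting the maximizing $x,y$, the surviving bound becomes a quadratic in a single variable $t\in[0,4]$ (with $t=c^2$ or $t=4-c^2$), to which the elementary maximum formula \eqref{l} applies and returns the value $1/36$. Keeping every constant exact throughout, so that the target $1/36$ is met precisely and not overshot, is the delicate part.

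Finally, to establish sharpness I would trace the extremal data $(c,\gamma,\eta,\rho)$ back through Lemma \ref{pformula} to the corresponding $p\in\mathcal{P}$, and then through $w=(p-1)/(p+1)$ and \eqref{7 formulaai} to an explicit $f\in\mathcal{S}_{\mathcal{B}}^{*}$ realizing $|H_3(1)|=1/36$, confirming that the bound cannot be improved.
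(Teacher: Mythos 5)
Your proposal follows essentially the same route as the paper: normalize $p_1\in[0,2]$ by rotation, substitute \eqref{7b sa2}--\eqref{7b sa5} into \eqref{1h3}, parametrize $p_2,p_3,p_4$ via Lemma \ref{pformula}, reduce by the triangle inequality (with $|\rho|=1$, $x=|\gamma|$, $y=|\eta|$) to maximizing an explicit function on the cuboid $[0,2]\times[0,1]\times[0,1]$, and exhibit the extremal function with $a_2=a_3=a_5=0$, $a_4=1/6$. The only cosmetic difference is in how the final box optimization is organized: the paper checks the interior, the six faces and the twelve edges directly, locating the maximum $1/36$ at the corner $(p,x,y)=(0,0,1)$ rather than reducing to the quadratic formula \eqref{l}, so the two arguments coincide in substance.
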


\begin{proof}  
Since the class $\mathcal {P}$ is invariant under rotation, we can take $p_1=:p$ belongs to the interval [0,2]. Substitute the values of $a_i(i=2,3,4,5)$ in (\ref{1h3}) from (\ref{7b sa2}) and (\ref{7b sa5}). We get
\begin{align*}
H_{3}(1)&=\dfrac{1}{21233664}\bigg(-3511 p^6 - 5160 p^4 p_2 - 14400 p^2 p_2^2 - 124416 p_2^3 + 56256 p^3 p_3\\
&\quad \quad\quad \quad \quad \quad +216576 p p_2 p_3 - 147456 p_3^2 - 145152 p^2 p_4 +165888p_2 p_4\bigg).
\end{align*}
After simplifying the calculations using (\ref{b2})-(\ref{b4}), we obtain				
$$H_{3}(1)=\dfrac{1}{21233664}\bigg(\beta_1(p,\gamma)+\beta_2(p,\gamma)\eta+\beta_3(p,\gamma){\eta}^2+\beta_4(p,\gamma,\eta)\rho\bigg),$$
for $\gamma,\eta,\rho\in \mathbb {D}.$ Here
\begin{align*}
\beta_1(p,\gamma):&=-1099p^6-1872{\gamma}^2p^2(4-p^2)^2-20736\gamma^3(4-p^2)^2-5760{\gamma}^3p^2(4-p^2)^2+1152{\gamma}^4p^2(4-p^2)^2\\
&\quad+3084{\gamma}p^4(4-p^2)+624p^4{\gamma}^2(4-p^2)-7776p^4{\gamma}^3(4-p^2)-31104{\gamma}^2p^2(4-p^2),\\
\beta_2(p,\gamma):&=96(1-|\gamma|^2)(4-p^2)(149p^3+324{\gamma}p^3+228p\gamma(4-p^2)-48p{\gamma}^2(4-p^2)),\\
\beta_3(p,\gamma):&=1152(1-|\gamma|^2)(4-p^2)(-32(4-p^2)-4|\gamma|^2(4-p^2)+27p^2\bar{\gamma}),\\
\beta_4(p,\gamma,\eta):&=10368(1-|\gamma|^2)(4-p^2)(1-|\eta|^2)(4(4-p^2)\gamma-3p^2).
\end{align*}
By choosing $x=|\gamma|$, $y=|\eta|$ and utilizing the fact that $|\rho|\leq 1,$ the above expression reduces to the following:
\begin{align*}
|H_{3}(1)|\leq \dfrac{1}{21233664}\bigg(|\beta_1(p,\gamma)|+|\beta_2(p,\gamma)|y+|\beta_3(p,\gamma)|y^2+|\beta_4(p,\gamma,\eta)|\bigg)\leq A(p,x,y),
\end{align*}
where 
\begin{equation}
A(p,x,y)=\dfrac{1}{21233664}\bigg(a_1(p,x)+a_2(p,x)y+a_3(p,x)y^2+a_4(p,x)(1-y^2)\bigg),\label{7b1 snew}
\end{equation} 
with 
\begin{align*}
a_1(p,x):&=1099p^6+1872x^2p^2(4-p^2)^2+20736x^3(4-p^2)^2+5760x^3p^2(4-p^2)^2+1152x^4p^2(4-p^2)^2\\
&\quad +3084xp^4(4-p^2)+624p^4x^2(4-p^2)+7776p^4x^3(4-p^2)+31104x^2p^2(4-p^2),\\
a_2(p,x):&=96(1-x^2)(4-p^2)(149p^3+324xp^3+228px(4-p^2)+48px^2(4-p^2)),\\
a_3(p,x):&=1152(1-x^2)(4-p^2)(32(4-p^2)+4x^2(4-p^2)+27p^2x),\\
a_4(p,x):&=10368(1-x^2)(4-p^2)(4x(4-p^2)+3p^2).
\end{align*}				
In the closed cuboid $Q:[0,2]\times [0,1]\times [0,1]$, we now maximise $A(p,x,y)$, by locating the maximum values in the interior of the six faces, on the twelve edges, and in the interior of $Q$.
\begin{enumerate}
\item We start by taking into account every internal point of $Q$. Assume that $(p,x,y)\in (0,2)\times (0,1)\times (0,1)$. We calculate $\partial{A}/\partial y$, partially differentiate (\ref{7b1 snew}) with respect to $y$ 
 to identify the points of maxima in the interior of $Q$. We get
\begin{align*}
\dfrac{\partial A}{\partial y}&=\dfrac{(4 - p^2)(1 - x^2)}{21233664}  \bigg(24 p x (5 + 2 x) + p^3 (17 + 24 x - 12 x^2)+96 (8 - 9 x + x^2) y \\
&\quad \quad\quad \quad\quad\quad\quad\quad\quad- 12 p^2 (25 - 27 x + 2 x^2) y\bigg). 
\end{align*}
Now $\partial A/\partial y=0$ gives
\begin{equation*}
y=y_0:=\dfrac{48 p x (19 + 4 x) + p^3 (149 + 96 x - 48 x^2)}{24 (1-x) (16 (x-8) + p^2 (59-4 x))}.
\end{equation*}
The existence of critical points requires that $y_0$ belong to $(0,1)$, which is only possible when 						
\begin{align}
300p^2+864x+24p^2x^2&>17p^3+120px+24p^3x+48px^2-12p^3x^2\nonumber\\
&\quad+768+864x+24p^2x^2.\label{7b1 h1}
\end{align}		
Now, we find the solution satisfying the inequality (\ref{7b1 h1})
for the existence of critical points using the hit and trial method. If we assume $p$ tends to $0$, then there does not exist any $x\in (0,1)$ satisfying (\ref{7b1 h1}). But, when $p$ tends to 2, (\ref{7b1 h1}) holds for all $x<175/648.$ We also observe that there does not exist any $p\in (0,2)$ when $x\in (174/648,1)$.
Similarly, if we assume $x$ tends to 0, then for all $p>1.61687$, (\ref{7b1 h1}) holds. After calculations, we observe that there does not exist any $x\in (0,1)$ when $p\in (0,1.61687).$
Thus, the domain for the solution of the equation is $(1.61687,2)\times (0,175/648).$ Now, we examine that $\frac{\partial A}{\partial y}|_{y=y_0}\neq 0$ in $(1.61687,2)\times (0,175/648).$
So, we conclude that the function $A$ has no critical point in $(0,2)\times (0,1)\times (0,1).$

\item The interior of each of the cuboid $Q$'s six faces is now being considered.\\
\underline{On $p=0$}
\begin{equation}
k_1(x,y):=\dfrac{16y^2-14x^2y^2-2x^4y^2-9x^3(1-2y^2)+18x(1-y^2)}{576},\quad x,y\in (0,1).\label{7b1 9.4}
\end{equation}
 Since
\begin{equation*}
\dfrac{\partial k_1}{\partial y}=\dfrac{(1 - x^2)(x+1)(8-x)y}{144}\neq 0,\quad x,y\in (0,1),
\end{equation*}
indicates that $k_1$ has no critical points in $(0,1)\times(0,1)$.	\\					
\noindent \underline{On $p=2$}
\begin{equation}
A(2,x,y):=\dfrac{1099}{331776}= 0.00331248,\quad x,y\in (0,1).\label{5 9.3}
\end{equation}
\underline{On $x=0$}
\begin{align}
k_2(p,y):&=\dfrac{1099p^6 + (4-p^2)(14304p^3y+36864y^2(4-p^2)+31104p^2(1-y^2)}{21233664}
							\label{7b1 9.1}
\end{align}
with $p\in (0,2)$ and $y\in (0,1).$ To determine the points of maxima, we solve $\partial k_2/\partial p=0$ and $\partial k_2/\partial y=0$. After solving $\partial k_2/\partial y=0,$ we get
\begin{equation}
y=\dfrac{149p^3}{24(59p^2-128)}(=:y_p).\label{7b1 y}
\end{equation}
In order to have $y_p\in (0,1)$ for the given range of $y$, $p_0:=p>\approx 1.61687$ is required. Based on calculations, $\partial k_2/\partial p=0$ gives
\begin{equation}
p(41472 - 20736 p^2 + 1099 p^4 + 28608 p y - 11920 p^3 y - 
 139776 y^2 + 45312 p^2 y^2)=0.\label{7b1 9}
\end{equation}
After substituting (\ref{7b1 y}) in (\ref{7b1 9}), we have
\begin{equation}
75497472p - 107347968 p^3 + 50314752 p^5 - 8246384 p^7 + 133989 p^9=0.\label{7b1 40}
\end{equation}
A numerical calculation suggests that $p\approx 1.24748\in (0,2)$ is the solution of (\ref{7b1 40}). So, we conclude that $k_2$ does not have any critical point in $(0,2)\times(0,1)$.
						
\noindent \underline{On $x=1$}
\begin{equation}
A(p,1,y)=k_3(p,y):=\dfrac{331776 + 99072 p^2 - 34704 p^4 - 1601 p^6}{21233664}, \quad p\in (0,2).\label{7b1 9.2}
\end{equation}
While computing $\partial k_3/\partial p=0$, $p_0:=p\approx 1.14405$ comes out to be the critical point. Undergoing simple calculations, $k_3$ achieves its maximum value $\approx 0.0187629$ at $p_0.$
						
\noindent \underline{On $y=0$} 
\begin{align*}
k_4(p,x):&=\dfrac{1}{21233664}\bigg(-1601 p^6 - 331776 (-1 - 2 x + 2 x^3)+2304 p^2 (97 - 144 x \\
&\quad \quad\quad\quad\quad\quad - 54 x^2 + 144 x^3) -144 p^4 (457 - 288 x - 216 x^2 + 288 x^3)\bigg).
\end{align*}
After further calculations such as,
\begin{align*}
\dfrac{\partial k_4}{\partial x}&=\dfrac{(4-p^2)}{1024}\bigg(8 - 24 x^2 + p^2 (-2 - 3 x + 6 x^2)\bigg)
\end{align*}
and \begin{align*}
\dfrac{\partial k_4}{\partial p}&=\dfrac{1}{3538944}\bigg(-1601 p^5 + 768 p (97 - 144 x - 54 x^2 + 144 x^3) \\
&\quad\quad\quad \quad\quad\quad -96 p^3 (457 - 288 x - 216 x^2 + 288 x^3)  \bigg),
\end{align*}
we observe that only real solutions $(p,x)$ of the system of equations $\partial k_4/\partial x=0$ and $\partial k_4/\partial p=0$ are $(2,1.74724)$, $(2,-1.74724)$, $(-1.36584,-0.835809)$, $(1.36584,-0.835809)$, $(2,-1.74724)$ and $(-0.854598,0.524203)$. Thus, no solution exists in $(0,2)\times (0,1)$, resulting in no critical points. 

\noindent \underline{On $y=1$} 
\begin{align*}
k_5(p,x):&=\dfrac{1}{21233664}\bigg\{1099 p^6 + 31104 p^2 (4- p^2) + 11484 p^4 (4-p^2) + 
 20736 (4-p^2)^2\\
&\quad\quad \quad\quad\quad\quad+8784 p^2 (4- p^2)^2 - 
 1152 (4-p^2) (1-x^2) \bigg(-16 (8 + x^2) \\ 
&\quad\quad \quad\quad\quad\quad+ p^2 (32 - 27 x + 4 x^2)\bigg) + 96 (4-p^2) (1-x^2) \bigg(48 p x (19 + 4 x)\\
&\quad\quad\quad \quad\quad\quad+ 
   p^3 (149 + 96 x - 48 x^2)\bigg)\bigg\}.
\end{align*}
Simple calculations leads to
\begin{align*}
 \frac{\partial{k_5}}{\partial x} &=\dfrac{(4-p^2)}{110592}\bigg(-192 x (7 + 2 x^2) + 6 p^2 (27 + 56 x - 81 x^2 + 16 x^3)+24 p (19 + 8 x\\
 &\quad\quad\quad\quad\quad\quad  - 57 x^2 - 16 x^3) + 
 p^3 (48 - 197 x - 144 x^2 + 96 x^3)\bigg)
\end{align*}
and \begin{align*}
\dfrac{\partial k_5}{\partial p}&=\dfrac{1}{3538944}\bigg(-1601 p^5 - 3072 x (-19 - 4 x + 19 x^2 + 4 x^3)+768 p (-85 + 54 x   \\
&\quad\quad\quad \quad\quad\quad + 112 x^2- 54 x^3 + 16 x^4)-96 p^3 (-15 + 216 x + 224 x^2- 216 x^3 \\
&\quad\quad\quad \quad\quad\quad  + 32 x^4)-80 p^4 (149 + 96 x - 197 x^2 - 96 x^3 + 48 x^4)+192 p^2 (149 \\
&\quad\quad\quad \quad\quad\quad  - 132 x- 245 x^2 + 132 x^3 + 96 x^4)\bigg).
\end{align*}
We note that the only real solutions $(p,x)$ of the system of equations $\partial k_5/\partial x=0$ and $\partial k_5/\partial p=0$ are $(-5.5858,2.7083)$, $(2,-2.2645)$, $(-2,0.357662)$, $(-1.98983,0.350993)$, $(0.932759,-1.56488)$ and $(-1.03049,-0.27789)$. Thus, no solution exists in $(0,2)\times (0,1)$, resulting in no critical points.					
\item We next examine the maxima attained by $A(p,x,y)$ on the edges of the cuboid $Q$.\\
From (\ref{7b1 9.1}), we have $A(p,0,0)=e_1(p):=(124416 p^2 - 31104 p^4 + 1099 p^6)/21233664.$ It is easy to observe that $e_1'(p)=0$ whenever $p=\delta_0:=0$ and $p=\delta_1:=1.50801\in [0,2]$ as its points of minima and maxima respectively. Hence, 
\begin{equation*}
A(p,0,0)\leq 0.00635802, \quad p\in [0,2].
\end{equation*}

Now considering (\ref{7b1 9.1}) at $y=1,$ we get $A(p,0,1)=e_2(p):=(589824 - 294912 p^2 + 57216 p^3 + 36864 p^4 - 14304 p^5 + 1099 p^6)/21233664.$ It is easy to observe that $e_2'(p)<0$ in $[0,2]$ and hence $p=0$ serves as the point of maxima. So,
\begin{equation*}
A(p,0,1)\leq \dfrac{1}{36}\approx 0.0277778, \quad p\in [0,2].
\end{equation*}
Through computations, (\ref{7b1 9.1}) shows that $A(0,0,y)$ attains its maxima at $y=1.$ This implies that
\begin{equation*}
A(0,0,y)\leq \dfrac{1}{36}, \quad y\in [0,1].
\end{equation*}
Since, (\ref{7b1 9.2}) does not involve $x$, we have $A(p,1,1)=A(p,1,0)=e_3(p):=(331776 + 99072 p^2 - 34704 p^4 - 1601 p^6)/21233664.$ Now, $e_3'(p)=33024p - 23136 p^3 - 1601 p^5=0$ when $p=\delta_2:=0$ and $p=\delta_3:=1.14405$ in the interval $[0,2]$ with $\delta_2$ and $\delta_3$ as points of minima and maxima, respectively. Hence
\begin{equation*}
A(p,1,1)=A(p,1,0)\leq 0.0187629,\quad p\in [0,2].
\end{equation*}
After considering $p=0$ in (\ref{7b1 9.2}), we get, $A(0,1,y)=1/64$. The equation (\ref{5 9.3}) has no variables. So, on the edges, 
the maximum value of $M(p,x,y)$ is
\begin{equation*}
A(2,1,y)=A(2,0,y)=A(2,x,0)=A(2,x,1)=\dfrac{1099}{331776},\quad x,y\in [0,1].
\end{equation*}
Using (\ref{7b1 9.4}), we obtain $A(0,x,1)=e_4(x):=(16 - 14 x^2 + 9 x^3 - 2 x^4)/576.$ Upon calculations, we see that $e_4(x)$ is a decreasing function in $[0,1]$ and attains its maxima at $x=0.$ Thus
\begin{equation*}
A(0,x,1)\leq \dfrac{1}{36},\quad x\in [0,1].
\end{equation*}
Again utilizing (\ref{7b1 9.4}), we get $A(0,x,0)=e_5(x):=x(2-x^2)/64.$ On further calculations, we get $e_5'(x)=0$ for $x=\delta_4:=\sqrt{2/3}.$ Also, $e_5(x)$ is an increases in $[0,\delta_4)$ and decreases in $(\delta_4,1].$ So, it reaches its maximum value at $\delta_4.$ Thus
\begin{equation*}
A(0,x,0)\leq 0.0170103,\quad x\in [0,1].
\end{equation*}
\end{enumerate}
Given all the cases, the inequality (\ref{7b1 s9.5}) holds. Let the function 
$f_0(z)\in \mathcal{S}^{*}_{\mathcal{B}}$, be defined as
\begin{equation}
f_0(z)=z\exp\bigg(\int_{0}^{z}\dfrac{\sqrt{1+\tanh t^3}-1}{t}dt\bigg)=z+\dfrac{z^4}{6}-\dfrac{z^7}{144}+\cdots,\label{7b1 sextremal}
\end{equation}
with $f_0(0)=0$ and $f_0'(0)=1$, acts as an extremal function for the bound of $|H_{3}(1)|$ for $a_2=a_3=a_5=0$ and $a_4=1/6$. 
\end{proof}

Next, we find the sharp bound of $|H_2(3)|$ for functions belonging to the class ${S}^{*}_{\mathcal{B}}$, given by
				
\begin{theorem}				
Let $f\in \mathcal {S}^{*}_{\mathcal{B}}$, then 
\begin{equation}
	|H_{2}(3)|\leq \dfrac{1}{36}.\label{7b2 9.5}
\end{equation}
This bound is sharp.
\end{theorem}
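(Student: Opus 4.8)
The plan is to follow exactly the same machinery that produced the bound for $|H_3(1)|$, since $H_2(3) = a_3 a_5 - a_4^2$ and the coefficients $a_3, a_4, a_5$ are already available from \eqref{7b sa2} and \eqref{7b sa5} in terms of $p_1, p_2, p_3, p_4$. First I would substitute these expressions into \eqref{7b2 9.5} to obtain $H_2(3)$ as a polynomial in $p_1, p_2, p_3, p_4$. Using rotational invariance of the class $\mathcal{P}$, I would set $p_1 =: p \in [0,2]$. Then I would apply the substitutions \eqref{b2}--\eqref{b4} from Lemma \ref{pformula} to rewrite everything in terms of $p, \gamma, \eta, \rho$ with $|\gamma|, |\eta|, |\rho| \le 1$, collecting the result in the form
\begin{equation*}
H_2(3) = c\bigl(\beta_1(p,\gamma) + \beta_2(p,\gamma)\eta + \beta_3(p,\gamma)\eta^2 + \beta_4(p,\gamma,\eta)\rho\bigr)
\end{equation*}
for an appropriate constant $c$.

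Next, I would pass to moduli: writing $x = |\gamma|$, $y = |\eta|$ and using $|\rho| \le 1$ together with the bound $|\beta_4| \le |\beta_4|_{\text{coeff}}(1-y^2)$, I would dominate $|H_2(3)|$ by a function $A(p,x,y)$ on the closed cuboid $Q = [0,2]\times[0,1]\times[0,1]$, just as in the preceding theorem. The task then reduces to maximizing $A$ over $Q$. Following the established template, I would search for interior critical points by solving $\partial A/\partial y = 0$ (which is linear in $y$, so it yields an explicit $y_0(p,x)$) and checking whether $y_0 \in (0,1)$ is admissible, then examine the six open faces, the twelve edges, and confirm no interior maximum exceeds the boundary values. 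I expect the faces $p=2$, $x=1$, and the edges where $y=1$ or $x=0$ to furnish the competing candidate maxima, with the value $1/36$ emerging on one of them.

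The main obstacle will be the face-by-face and edge-by-edge optimization: each face reduces to a two-variable polynomial extremization requiring the solution of a coupled system $\partial A/\partial p = 0$, $\partial A/\partial x = 0$, and I would need to verify numerically (as in Theorem \ref{7b1 ssharph31}) that all real solutions lie outside $(0,2)\times(0,1)$ or give subdominant values. The delicate point is ensuring that $1/36$ is genuinely the global maximum and not merely a local one on some edge; this demands careful comparison of all the boundary candidates. For sharpness, I anticipate that the same extremal function $f_0$ from \eqref{7b1 sextremal}, for which $a_2 = a_3 = a_5 = 0$ and $a_4 = 1/6$, will work here as well: indeed with those values $H_2(3) = a_3 a_5 - a_4^2 = -1/36$, so $|H_2(3)| = 1/36$, confirming the bound is attained and hence sharp.
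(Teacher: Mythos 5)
Your proposal follows essentially the same route as the paper: substituting \eqref{7b sa2} and \eqref{7b sa5} into $H_2(3)=a_3a_5-a_4^2$, applying Lemma \ref{pformula}, majorizing by a function $B(p,x,y)$ on the cuboid $[0,2]\times[0,1]\times[0,1]$, and carrying out the interior/face/edge analysis, with the maximum $1/36$ indeed arising on the edge $p=0$, $x=0$, $y=1$. Your sharpness argument via the function $f_0$ of \eqref{7b1 sextremal} with $a_3=a_5=0$, $a_4=1/6$ is exactly the one the paper uses.
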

\begin{proof}
We proceed on the similar lines as in the proof of Theorem \ref{7b1 ssharph31}.
Assuming $p_1=:p\in [0,2]$, we substitute the values of $a_ i'(i=3,4,5)$ from (\ref{7b sa2}) and (\ref{7b sa5}) into (\ref{3h23}), we obtain
\begin{align*}
H_{2}(3)&=\dfrac{1}{10616832}\bigg(-761 p^6 + 408 p^4 p_2 - 2880 p^2 p_2^2 - 41472 p_2^3+10848 p^3 p_3 + 52992 p p_2 p_3\\
&\quad\quad \quad \quad \quad \quad-73728 p_3^2 - 31104 p^2 p_4 + 82944 p_2 p_4\bigg).
\end{align*}
Using (\ref{b2})-(\ref{b4}), we arrive at
	$$H_{2}(3)=\dfrac{1}{5308416}\bigg(\beta_5(p,\gamma)+\beta_6(p,\gamma)\eta+\beta_7(p,\gamma){\eta}^2+\beta_8(p,\gamma,\eta)\rho\bigg),$$
	where $\gamma,\eta,\rho\in \mathbb {D},$
\begin{align*}	
\beta_5(p,\gamma):&=-437p^6-5904{\gamma}^2p^2(4-p^2)^2+1440{\gamma}^3p^2(4-p^2)^2+576{\gamma}^4p^2(4-p^2)^2\\
&\quad+5184{\gamma}^2p^2(4-p^2)-852{\gamma}p^4(4-p^2)-4008p^4{\gamma}^2(4-p^2)+1296p^4{\gamma}^3(4-p^2),\\
\beta_6(p,\gamma):&=(1-|\gamma|^2)(4-p^2)(5424p^3-5184p^3{\gamma}-2880p{\gamma}(4-p^2)-2304p\gamma^2 (4-p^2)),\\
\beta_7(p,\gamma):&=576(1-|\gamma|^2)(4-p^2)(-32(4-p^2)-9p^2\bar{\gamma}-4|\gamma|^2(4-p^2))\\
\beta_8(p,\gamma,\eta):&=5184(1-|\gamma|^2)(4-p^2)(1-|\eta|^2)(p^2+4\gamma(4-p^2)).
\end{align*}
Additionally, by using the fact that $|\rho|\leq 1$ and taking $x=|\gamma|$, $y=|\eta|$, we obtain
\begin{align*}
	|H_{2}(3)|\leq \dfrac{1}{10616832}\bigg(|\beta_5(p,\gamma)|+|\beta_6(p,\gamma)|y+|\beta_7(p,\gamma)|y^2+|\beta_8(p,\gamma,\eta)|\bigg)\leq B(p,x,y),
\end{align*}
where 
\begin{equation}
		B(p,x,y)=\dfrac{1}{10616832}\bigg(b_1(p,x)+b_2(p,x)y+b_3(p,x)y^2+b_4(p,x)(1-y^2)\bigg),\label{7b2 snew}
\end{equation} 
with 
\begin{align*}
	b_1(p,x):&=437p^6+5904p^2x^2(4-p^2)^2+1440p^2x^3(4-p^2)^2+576p^2x^4(4-p^2)^2\\
	&\quad +5184p^2x^2(4-p^2)+852p^4x(4-p^2)+4008p^4x^2(4-p^2)+1296p^4x^3(4-p^2),\\
b_2(p,x):&=(4-p^2)(1-x^2)(5424p^3+5184p^3x+2880px(4-p^2)+2304px^2(4-p^2)),\\
b_3(p,x):&=576(4-p^2)(1-x^2)(32(4-p^2)+9p^2x+4x^2(4-p^2)),\\
b_4(p,x):&=5184(4-p^2)(1-x^2)(p^2+4x(4-p^2)).
\end{align*}
At this point, we must maximise $B(p,x,y)$ in the closed cuboid $R:[0,2]\times [0,1]\times [0,1]$. By identifying the maximum values on the twelve edges, the interior of $R$, and the interiors of the six faces, we can prove this.
\begin{enumerate}
\item We start by taking into account, every interior point of $R$. Assume that $(p,x,y)\in (0,2)\times (0,1)\times (0,1).$ On differentiating (\ref{7b2 snew}) with respect to $y$, to locate the points of maxima in the interior of $R$, we obtain
\begin{align*}
	    \dfrac{\partial B}{\partial y}&=\dfrac{(1 - x^2)}{221184}(4 - p^2)  \bigg(48 p x (5 + 4 x) + p^3 (113 + 48 x - 48 x^2)\\
	    & \quad \quad \quad\quad\quad\quad \quad\quad\quad +384 (8 - 9 x + x^2) y - 24 p^2 (41 - 45 x + 4 x^2) y\bigg). 
\end{align*}
Now $\partial B/\partial y=0$ gives
\begin{equation*}
	    y=y_1:=\dfrac{48 p x (5 + 4 x) + p^3 (113 + 48 x - 48 x^2)}{24 (1-x) (16 (x-8) + p^2 (41- 4 x))}.
\end{equation*}
Since $y_1$ must be a member of $(0,1)$ for critical points to exist, this is only possible if
\begin{equation}
48 p x (5 + 4 x) + p^3 (113 + 48 x - 48 x^2)<24 (1-x) (16 (x-8) + p^2 (41- 4 x)).\label{7b2 h1}
\end{equation}
Now, we find the solutions satisfying the inequality (\ref{7b2 h1}) for the existence of critical points using hit and trial method. If $p$ tends to 0 and 2, then no $x\in (0,1)$ exists satisfying (\ref{7b2 h1}). Similarly, if take $x$ tending to 0 and 1, then there does not exist any $p\in (0,2)$ such that (\ref{7b2 h1}) holds.
So, we conclude that the function $B$ has no critical point in $(0,2)\times (0,1)\times (0,1).$ 

\item Now, we study the interior of six faces of the cuboid $R$.\\
\underline{On $p=0$}
\begin{equation}
l_1(x,y):=\dfrac{(1-x^2)(8y^2+x^2y^2+9x(1-y^2))}{288},\label{7b2 9.4}
\end{equation}
with $x,y\in (0,1)$. We note that, in $(0,1)\times(0,1)$, $l_1$ does not have any critical point. As
\begin{equation*}
    \dfrac{\partial l_1}{\partial y}=\dfrac{y(1-x^2)(x-1)(x-8)}{144}\neq 0\quad x,y\in (0,1).
\end{equation*}

\noindent \underline{On $p=2$}
\begin{equation}
    B(2,x,y):=\dfrac{437}{165888}=0.00263431,\quad x,y\in (0,1).\label{7b2 9.3}
\end{equation}
\underline{On $x=0$}
\begin{equation}
    l_2(p,y):=\dfrac{437 p^6 +(4-p^2)( 5424 p^3  y + 18432 (4 - p^2) y^2 + 
 5184 p^2 (1-y^2))}{10616832}\label{7b2 9.1}
\end{equation}
with $p\in (0,2)$ and $y\in (0,1).$ We solve $\partial l_2/\partial p$ and $\partial l_2/\partial y$ to locate the points of maxima. On solving $\partial l_2/\partial y=0,$ we obtain
\begin{equation}
    y=\dfrac{113p^3}{24(41p^2-128)}=:y_p.\label{7b2 y}
\end{equation}
For the given range of $y$, we should have $y_p\in (0,1)$ but no such $p\in (0,2)$ exists.\\
\underline{On $x=1$}
\begin{equation}
   B(p,1,y)= l_3(p,y):=\dfrac{ (147456p^2 - 43920 p^4 + 2201 p^6)}{10616832}, \quad p\in (0,2).\label{7b2 9.2}
\end{equation}
And when computing $\partial l_3/\partial p=0$, $p=p_0\approx1.40378$ turns out to be the critical point. According to elementary calculations, $l_3$ reaches its  maximum value $\approx 0.0128915$ at $p_0.$\\
\noindent \underline{On $y=0$}
\begin{align*}
    l_4(p,x):&=\dfrac{1}{10616832}\bigg(331776 x (1- x^2) + 2304 p^2 (9 - 72 x + 41 x^2 + 82 x^3 + 4 x^4)\\
    &\quad \quad\quad \quad\quad \quad -48 p^4 (108 - 503 x + 650 x^2 + 564 x^3 + 96 x^4)\\
    &\quad \quad\quad \quad \quad\quad+p^6 (437 - 852 x + 1896 x^2 + 144 x^3 + 576 x^4) \bigg)=B(p,x,0).
\end{align*}
On computations, we obtain
\begin{align*}
    \dfrac{\partial l_4}{\partial x}&=\dfrac{(4-p^2)}{884736}\bigg(6912 (1 - 3 x^2) + 96 p^2 (-18 + 41 x + 69 x^2 + 8 x^3)\\
     &\quad \quad\quad \quad \quad \quad-p^4 (-71 + 316 x + 36 x^2 + 192 x^3)\bigg)
\end{align*}
and \begin{align*}
    \dfrac{\partial l_4}{\partial p}&=\dfrac{1}{1769472}\bigg(768 p (9 - 72 x + 41 x^2 + 82 x^3 + 4 x^4)-32 p^3 (108 - 503 x+ 650 x^2\\
    &\quad\quad\quad\quad\quad \quad  + 564 x^3 + 96 x^4)+p^5 (437 - 852 x + 1896 x^2 + 144 x^3 + 576 x^4)\bigg).
\end{align*}
The common real solutions $(p,x)$ of the system of equations, $\partial l_4/\partial x=0$ and $\partial l_4/\partial p=0$ are $(-2,-2.86143)$, $(-2.6516,-0.571214)$, $(-2,-0.247504)$, $(-2,-2.86143)$, $(2,0.0163426)$, $(-2,0.0163426)$, $(2.6516,-0.571214)$, $(2,-0.247504)$, $(0,0.57735)$, $(0,-0.57735)$ and\\ $(-0.914024,0.721238)$. Thus, no solution exists in $(0,2)\times (0,1)$, resulting in no critical points.

\noindent \underline{On $y=1$}
\begin{align*}
l_5(p,x):&=\dfrac{1}{10616832}\bigg(9216 p x (5+4x-5x^2-4x^3) + 36864 (8 - 7 x^2 - x^4)-2304 p^2 (64 \\
&\quad\quad \quad\quad\quad\quad - 9 x- 106 x^2 - x^3 - 12 x^4) - 
 48 p^5 (113 + 48 x - 161 x^2- 48 x^3 + 48 x^4)  \\ 
&\quad\quad \quad\quad\quad\quad+192 p^3 (113 - 12 x - 209 x^2 + 12 x^3 + 96 x^4) + 48 p^4 (384 - 37 x- 1094 x^2 \\
&\quad\quad\quad \quad\quad\quad - 24 x^3 -144 x^4)+p^6 (437 - 852 x + 1896 x^2+ 144 x^3 + 576 x^4)\bigg).
\end{align*}
On computations, we get
\begin{align*}
    \dfrac{\partial l_5}{\partial x}&=\dfrac{(4-p^2)}{884736}\bigg(-1536 x (7 + 2 x^2) + 192 p (5 + 8 x - 15 x^2 - 16 x^3)+48 p^2 (9 + 156 x\\
     &\quad \quad\quad \quad \quad \quad + 3 x^2 + 32 x^3) +8 p^3 (24 - 161 x - 72 x^2 + 96 x^3)+p^4 (71 - 316 x\\
  &\quad \quad\quad \quad \quad \quad  - 36 x^2 - 192 x^3) \bigg)
\end{align*}
and \begin{align*}
    \dfrac{\partial l_5}{\partial p}&=\dfrac{1}{1769472}\bigg(1536 x (5 +4 x - 5 x^2 - 4 x^3) - 
 768 p (64 - 9 x - 106 x^2 - x^3 -12 x^4)\\
&\quad\quad\quad\quad\quad \quad - 40 p^4 (113 + 48 x - 161 x^2 - 48 x^3 + 48 x^4) + 
 96 p^2 (113 - 12 x - 209 x^2 \\
&\quad\quad\quad\quad\quad \quad + 12 x^3 + 96 x^4)+ 32 p^3 (384 - 37 x - 1094 x^2 -24 x^3 - 144 x^4)\\
 &\quad\quad\quad\quad\quad \quad + 
 p^5 (437 - 852 x+ 1896 x^2 + 144 x^3 + 576 x^4)\bigg).
\end{align*}
The common real solutions $(p,x)$ of the system of equations, $\partial l_5/\partial x=0$ and $\partial l_5/\partial p=0$ are $(10.3578,0.237179)$, $(2.61706,3.83978)$, $(-2,-2.97479)$, $(2,2.03127)$, $(-2.52708,-1.62061)$, $(-2,-0.783621)$, $(-1.37805,-1.02453)$, $(-1.77448,0.0488452)$, $(2.00019, -0.515602)$, $(0,0)$ and $(1.35192,-1.00909)$. Thus, no solution exists in $(0,2)\times (0,1)$, resulting in no critical points.

\item Now, we calculate the maximum values achieved by $B(p,x,y)$ on the edges of the cuboid $R$. From (\ref{7b2 9.1}), we have $B(p,0,0)=f_1(p):=(20736 p^2 - 5184 p^4 + 437 p^6)/10616832.$ It is easy to observe that $f_1'(p)=0$ for $p=\delta_0:=0$ in the interval $[0,2]$. The maximum value of $f_1(p)$ is $0.$\\ 
Now considering (\ref{7b2 9.1}) at $y=1,$ we get $B(p,0,1)=f_2(p):=(294912 - 147456 p^2 + 21696 p^3 + 18432 p^4 - 5424 p^5 + 437 p^6)/10616832.$ It is easy to observe that $f_2'(p)$ is a decreasing function in $[0,2]$ and hence $p=0$ acts as its point of maxima. Thus
\begin{equation*}
    B(p,0,1)\leq \dfrac{1}{36}=0.0277778, \quad p\in [0,2].
    \end{equation*}
Through computations, (\ref{7b2 9.1}) shows that $B(0,0,y)=y^2/36$, attains its maximum value at $y=1.$ This implies that
\begin{equation*}
    B(0,0,y)\leq \dfrac{1}{36}, \quad y\in [0,1].
\end{equation*}
Since, (\ref{7b2 9.2}) is independent of $x$, we have $B(p,1,1)=B(p,1,0)=f_3(p):=(147456p^2 - 43920 p^4 + 2201 p^6)/10616832.$ Now, $f_3'(p)=294912 p - 175680 p^3 + 13206 p^5=0$ when $p=\delta_2:=0$ and $p=\delta_3:=1.40378$ in the interval $[0,2]$ with $\delta_2$ and $\delta_3$ as points of minima and maxima respectively. Hence
\begin{equation*}
    B(p,1,1)=B(p,1,0)\leq 0.0128915,\quad p\in [0,2].
\end{equation*}
On substituting $p=0$ in (\ref{7b2 9.2}), we get, $B(0,1,y)=0.$ The equation (\ref{7b2 9.3}) is independent of the all the variables namely $p$, $x$ and $y$. Thus the maximum value of $B(p,x,y)$ on the edges $p=2, x=1; p=2, x=0; p=2, y=0$ and $p=2, y=1,$ respectively, is given by
\begin{equation*}
    B(2,1,y)=B(2,0,y)=B(2,x,0)=B(2,x,1)=\dfrac{437}{165888},\quad x,y\in [0,1].
\end{equation*}
From (\ref{7b2 9.1}), we obtain $B(0,0,y)=y^2/36.$ A simple calculation shows that
\begin{equation*}
    B(0,0,y)\leq \dfrac{1}{36},\quad y\in [0,1].
    \end{equation*}
Using (\ref{7b2 9.4}), we obtain $B(0,x,1)=f_4(x):=(8-7x^2-x^4)/288.$ Upon calculations, we see that $f_4$ is a decreasing function in $[0,1]$ and hence attains its maximum value at $x=0.$ Thus
 \begin{equation*}
     B(0,x,1)\leq \dfrac{1}{36},\quad x\in [0,1].
 \end{equation*}
On again using (\ref{7b2 9.4}), we get $B(0,x,0)=f_5(x):=9x(1-x^2)/288.$ On further calculations, we get $f_5'(x)=0$ for $x=\delta_4:=1/\sqrt{3}.$ Also, $f_5(x)$ is an increasing function in $[0,\delta_4)$ and decreasing in $(\delta_4,1].$ So, it attains its maximum value at $\delta_4$. Thus
 \begin{equation*}
     B(0,x,0)\leq 0.0120281,\quad x\in [0,1].
 \end{equation*}
\end{enumerate}
In view of all the cases, the inequality (\ref{7b2 9.5}) holds.
The function specified in (\ref{7b1 sextremal}) acts as an extremal function for the bounds of $|H_{2}(3)|$ having values $a_3=a_5=0$ and $a_4=1/6.$
\end{proof}
\subsection{Fourth Hankel determinant}

Given that sharp bounds for third-order Hankel determinants have been attained for various subclasses of stalike functions, as shown in Table \ref{7 table1}, determining bounds for fourth-order Hankel determinants proves to be considerably challenging, necessitating extensive computations. Consequently, there have been relatively few efforts in this direction within the existing literature, for recent advancements, we refer \cite{nehapetal, nehacardioid, nehaexpo,bellv}. Subsequently, we introduce a lemma which is required in forthcoming results.

\begin{lemma}\cite{bellv,shelly}\label{2 pomi lemma}
Let $p=1+\sum_{n=1}^{\infty}p_nz^n\in \mathcal{P}.$ Then
\begin{equation*}
|p_n|\leq 2, \quad n\geq 1,\label{2 caratheodory1}
\end{equation*}
\begin{equation*}
|p_{n+k}-\nu p_n p_k|\leq \begin{cases}
2, & 0\leq \nu\leq 1;\\
2|2\nu-1|,& otherwise,
\end{cases}\label{2 caratheodory2}
\end{equation*}
and \begin{equation*}
|p_1^3-\nu p_3|\leq
\begin{cases}2|\nu-4|,& \nu\leq 4/3;\\ \\
2\nu\sqrt{\dfrac{\nu}{\nu-1}},& 4/3<\nu.
\end{cases}\label{2 caratheodory3}
\end{equation*}
\end{lemma}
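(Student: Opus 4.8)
The three displayed estimates are classical facts about the Carathéodory class $\mathcal{P}$, and I would organise the argument around the Herglotz/Carathéodory--Toeplitz representation together with the Libera--Z\l otkiewicz parametrisation already recorded in Lemma \ref{pformula}. The first inequality is the base case and needs nothing more than Carathéodory's lemma: writing $p\in\mathcal{P}$ through its Herglotz measure $\mu$ on the unit circle gives $p_n=2\int_0^{2\pi}e^{-in\theta}\,d\mu(\theta)$, whence $|p_n|\le 2\int d\mu=2$, with equality for $(1+\lambda z)/(1-\lambda z)$, $|\lambda|=1$.

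For the second inequality I would exploit positive semidefiniteness. By the Carathéodory--Toeplitz theorem every Hermitian Toeplitz section $[c_{i-j}]$, with $c_0=2$, $c_m=p_m$ and $c_{-m}=\overline{p_m}$, is positive semidefinite; hence so is its $3\times3$ principal submatrix on the index set $\{0,k,n+k\}$, namely
\[
\begin{pmatrix} 2 & \overline{p_k} & \overline{p_{n+k}}\\ p_k & 2 & \overline{p_n}\\ p_{n+k} & p_n & 2 \end{pmatrix}\succeq 0.
\]
Expanding its determinant produces the single scalar constraint $|p_n|^2+|p_k|^2+|p_{n+k}|^2-\RE(p_n p_k\overline{p_{n+k}})\le 4$, to be used together with $|p_n|,|p_k|\le 2$. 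Writing $p_{n+k}=\nu p_n p_k+w$ and substituting, the constraint becomes $|p_n|^2+|p_k|^2+\nu(\nu-1)|p_np_k|^2+(2\nu-1)\RE(p_np_k\overline{w})+|w|^2\le 4$, and one maximises $|w|$. The case split is governed by the factor $2\nu-1$ multiplying the cross term: for $0\le\nu\le1$ the optimisation forces $|w|\le 2$, whereas outside this range the optimum is attained at $|p_n|=|p_k|=2$ and yields $2|2\nu-1|$. Sharpness is then confirmed directly, on $(1+z)/(1-z)$ for $\nu\notin[0,1]$ and on the lacunary function $(1+z^{n+k})/(1-z^{n+k})$ (for which $p_n=p_k=0$, $p_{n+k}=2$) for $0\le\nu\le1$. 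This constrained optimisation, together with the verification that the single $3\times3$ minor already delivers the sharp constant, is the step I expect to be the main obstacle.

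The third inequality is the most computational but the most direct, since it involves only $p_1$ and $p_3$. I would substitute the expression for $4p_3$ from Lemma \ref{pformula} into $p_1^3-\nu p_3$, normalise $p_1=:p\in[0,2]$ by rotation, set $x=|\gamma|\in[0,1]$, and use $|\eta|\le 1$ with optimal phase. This collapses $|p_1^3-\nu p_3|$ to the maximisation of an explicit function of $(p,x)$ on $[0,2]\times[0,1]$, exactly the boundary/interior analysis carried out repeatedly elsewhere in this paper. At $p=2$ the factor $(4-p^2)$ kills every term except the $p^3$ term, leaving $8\,|1-\nu/4|=2|\nu-4|$, and this boundary value dominates precisely when $\nu\le 4/3$; for $\nu>4/3$ the coefficient $(1-\nu/4)$ is too small for the boundary to win, an interior critical point takes over, and it produces $2\nu\sqrt{\nu/(\nu-1)}$, the two branches matching at $\nu=4/3$. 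Sharpness in the first range is witnessed by $(1+z)/(1-z)$ (where $p_1^3-\nu p_3=8-2\nu$), and in the second range by the appropriate interior extremiser obtained from the parametrisation.
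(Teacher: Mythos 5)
The paper does not prove this lemma at all: it is quoted verbatim from \cite{bellv,shelly} (the first two parts go back to Carath\'eodory and to the coefficient inequality of Ravichandran--Verma, the third to the Bell-numbers paper), so there is no internal proof to compare against and your attempt has to be judged on its own. Your outline is sound and is essentially the standard argument from those sources. The Herglotz argument for $|p_n|\le 2$ is fine. For the second inequality, your $3\times 3$ principal minor of the Carath\'eodory--Toeplitz matrix does give exactly the constraint $|p_n|^2+|p_k|^2+|p_{n+k}|^2-\RE(p_np_k\overline{p_{n+k}})\le 4$, and completing the square in your substituted form yields $|w|\le \tfrac{|2\nu-1|}{2}|p_n||p_k|+\tfrac12\sqrt{(4-|p_n|^2)(4-|p_k|^2)}$; a single Cauchy--Schwarz step then produces the case split at $|2\nu-1|=1$, so the step you flagged as the main obstacle does go through, and your extremal functions are the right ones. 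For the third inequality your reduction via Lemma \ref{pformula} is the standard route and the boundary value $2|\nu-4|$ at $p=2$ is computed correctly, but the actual content of that part --- locating the interior critical point for $\nu>4/3$ and verifying that it yields precisely $2\nu\sqrt{\nu/(\nu-1)}$, together with the (trivial, pure triangle-inequality) case $\nu\le 0$ --- is asserted rather than carried out; that computation is the only piece you would still need to supply to have a complete proof.
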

Now, we try to estimate the bound of sixth and seventh coefficient bounds of function $f\in \mathcal{S}^{*}_{\mathcal{B}}$ as follows:

\begin{lemma}\label{7 a6a7bound}
    Let $f\in \mathcal{S}^{*}_{\mathcal{B}}$, then $|a_6|\leq 0.611233$ and $|a_7|\leq 0.690994$.
\end{lemma}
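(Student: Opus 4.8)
The plan is to bound $a_6$ and $a_7$ directly from their polynomial representations \eqref{7b sa6} and \eqref{7b sa7} by the triangle inequality, after a judicious regrouping of the monomials. Since $\mathcal{P}$ is invariant under rotation, I first normalise $p_1=:p\in[0,2]$. The crucial point is that a naive termwise use of $|p_n|\le 2$ is far too lossy (it produces bounds well above $1$ in both cases), so the real content is to combine the high-order products into the sharp quotient estimates supplied by Lemma \ref{2 pomi lemma}, namely $|p_{n+k}-\nu p_np_k|$ and $|p_1^3-\nu p_3|$, supplemented by the refined Ma--Minda inequality \eqref{use}.

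For $a_6$ I would pair the leading coefficient $92160\,p_5$ with $-86400\,p_1p_4$, writing this block as $92160\big(p_5-\tfrac{15}{16}p_1p_4\big)$ and using $|p_5-\tfrac{15}{16}p_1p_4|\le 2$ from the case $0\le\nu\le1$ of Lemma \ref{2 pomi lemma}; this simultaneously disposes of the two most dangerous factors $p_5$ and $p_1p_4$ and leaves an expression in $p_1,p_2,p_3$ only. I would then collect the two $p_3$-terms as $-96000\,p_3\big(p_2-0.48\,p_1^2\big)$ and bound them by $|p_3|\le2$ together with $|p_2-0.48\,p_1^2|\le 2$, and collect the $p_2$-terms as $50880\,p_1p_2\big(p_2-\tfrac{8560}{50880}p_1^2\big)$, where \eqref{use} supplies the $p$-dependent estimate $|p_2-\beta p_1^2|\le 2-\beta p^2$. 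Substituting $|p_1|=p$ and $|p_2|\le2$ collapses the whole bound into a single-variable polynomial $\Phi_6(p)$ on $[0,2]$, whose maximum, located by elementary calculus, is expected to equal the stated $0.611233$.

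For $a_7$ the same strategy applies but is notably heavier, since \eqref{7b sa7} is a degree-six form in ten monomials. Here $p_5$ appears only through $-21012480\,p_1p_5$, so I would merge it with $11577600\,p_1^2p_4$ into $-21012480\,p_1\big(p_5-\tfrac{11577600}{21012480}p_1p_4\big)$, whose inner factor again obeys the $0\le\nu\le1$ case of Lemma \ref{2 pomi lemma}; the surviving $p_4$-contribution $-23500800\,p_2p_4$ is merged with $4723200\,p_2^3$ to form $-23500800\,p_2\big(p_4-\nu p_2^2\big)$. The mixed block $26150400\,p_1p_2p_3-2648640\,p_1^3p_3$ factors as $p_1p_3\big(26150400\,p_2-2648640\,p_1^2\big)$ and is controlled through \eqref{use}, the term $-11980800\,p_3^2$ is bounded by $|p_3|\le2$ (with the cubic estimate $|p_1^3-\nu p_3|$ kept available should a residual cubic appear), while the purely $p_1,p_2$ monomials $601421\,p_1^6-2365320\,p_1^4p_2-4818240\,p_1^2p_2^2$ are absorbed using $|p_2|\le2$ and $|p_1|=p$. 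As before, this reduces the estimate to a one-variable polynomial $\Phi_7(p)$ on $[0,2]$, whose maximisation should yield $0.690994$.

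The main obstacle is not any single inequality but the combinatorial choice of groupings: the target constants are not sharp, so one must select exactly those pairings that push the triangle-inequality bound below the claimed values (several otherwise natural groupings overshoot), and then carry out the somewhat delicate one-variable maximisation of the resulting sixth-degree polynomials on $[0,2]$. A secondary subtlety is ensuring that the various moduli bounds $|p_2|\le2$, $|p_3|\le2$ and the refined \eqref{use} are applied consistently, so that $\Phi_6$ and $\Phi_7$ are genuine upper bounds for the corresponding coefficient moduli.
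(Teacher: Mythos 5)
Your overall strategy --- normalise $p_1=:p\in[0,2]$, regroup the monomials of \eqref{7b sa6} and \eqref{7b sa7} into the forms $|p_{n+k}-\nu p_np_k|$ and $|p_1^3-\nu p_3|$ of Lemma \ref{2 pomi lemma}, and finish with the triangle inequality --- is exactly the paper's strategy; the paper does not, however, use the refined inequality \eqref{use} or any one--variable optimisation: each of its groups is bounded by a constant outright. The entire content of the lemma lies in \emph{which} groupings one picks, and here your two halves fare very differently. For $a_6$ your decomposition is valid and in fact overshoots in the good direction: the blocks $92160(p_5-\tfrac{15}{16}p_1p_4)$, $-96000p_3(p_2-0.48p_1^2)$, $50880p_1p_2(p_2-\tfrac{8560}{50880}p_1^2)$ and $-2367p_1^5$ give at most $184320+384000+(203520p-17120p^3)+2367p^5$, which is increasing on $[0,2]$ and peaks at $914144$, i.e. $|a_6|\le 914144/1843200\approx 0.496$. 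That proves the stated inequality (with room to spare), but your prediction that the maximum ``equals $0.611233$'' is wrong; the paper's value $35207/57600$ comes from its own different pairing ($p_1^3(-2367p_1^2-8560p_2)$, $p_1(50880p_2^2-86400p_4)$, $92160p_5-96000p_2p_3$, $46080p_1^2p_3$), which totals $1126624/1843200$.

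For $a_7$ your proposed grouping does not establish the bound. The fatal choice is leaving the block $601421p_1^6-2365320p_1^4p_2-4818240p_1^2p_2^2$ to be ``absorbed'' termwise: at $p=2$, $|p_2|\le2$ this block alone contributes $38490944+75690240+77091840=191273024$, i.e. already $\approx 0.360$ of the normalising factor $530841600$. Adding your other groups ($84049920$ from $-21012480p_1(p_5-\nu p_1p_4)$, $94003200$ from $-23500800p_2(p_4-\nu p_2^2)$, $166824960$ from the $p_1p_3$ block via \eqref{use}, $47923200$ from $-11980800p_3^2$) gives a total of $584074304$, and the resulting $\Phi_7$ is increasing on $[0,2]$, so your method yields only $|a_7|\le 584074304/530841600\approx 1.10$, far above the target $0.690994=366808320/530841600$. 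The paper avoids this by pairing $p_1^6$ with $p_1^4p_2$ as $p_1^4(601421p_1^2-2365320p_2)$, where $\nu=601421/2365320\in[0,1]$ gives at most $16\cdot2\cdot2365320=75690240$, and by pairing $p_2^3$ with $p_1^2p_2^2$ as $p_2^2(4723200p_2-4818240p_1^2)$, at most $39306240$; it also keeps $p_2p_4$ with $p_1p_2p_3$ and $p_3^2$ with $p_1^3p_3$ (the latter via the $|p_1^3-\nu p_3|$ estimate, which is where the irrational constants in \eqref{7b1valuefinal}--\eqref{7b3valuefinal} originate). So your $a_7$ argument has a genuine gap: you would need to redistribute $p_1^6$, $p_1^4p_2$, $p_1^2p_2^2$ and $p_2^3$ along the paper's lines (or find an equally economical alternative) before the claimed constant is reachable.
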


\begin{proof}
From \eqref{7b sa6}, we have
  \begin{align*} 
1843200a_6&= -2367 p_1^5 - 8560 p_1^3 p_2 + 50880 p_1 p_2^2- 86400 p_1 p_4+ 92160 p_5 -96000 p_2 p_3 \\
&\quad+ 46080 p_1^2 p_3
  \end{align*}
  or 
  \begin{align*} 
1843200|a_6|&\leq | p_1^3(-2367 p_1^2 - 8560 p_2)| + |p_1(50880  p_2^2- 86400 p_4)|+ |92160 p_5 -96000 p_2 p_3|\nonumber \\
&\quad+ |46080 p_1^2 p_3|.
  \end{align*}
  Using Lemma \ref{2 pomi lemma} and the triangle inequality, we arrive at
  \begin{equation}\label{7 a6bound}
      |a_6|\leq \frac{35207}{57600}\approx 0.611233.
  \end{equation}
  Similarly,
  \begin{align*}
530841600a_7&= 601421 p_1^6 - 2365320 p_1^4 p_2 + 4723200 p_2^3- 4818240 p_1^2 p_2^2  - 2648640 p_1^3 p_3- 11980800 p_3^2\nonumber\\
 &\quad +26150400 p_1 p_2 p_3- 
 23500800 p_2 p_4  + 11577600 p_1^2 p_4  - 21012480 p_1 p_5
\end{align*}
or 
\begin{align*}
530841600|a_7|&\leq |p_1^4(601421 p_1^2 - 2365320  p_2)| +|p_2^2 ( 4723200 p_2- 4818240 p_1^2)| \nonumber\\
 &\quad  +|  p_3(2648640 p_1^3- 11980800 p_3)|+|p_2(26150400 p_1 p_3- 23500800  p_4)|\nonumber\\
&\quad  +|p_1( 11577600 p_1^2 p_4  - 21012480  p_5)|.
 \end{align*}
By employing Lemma \ref{2 pomi lemma} and the triangle inequality, we obtain
\begin{equation}\label{7 a7bound}
      |a_7|\leq \frac{31841}{46080}\approx 0.690994.
\end{equation}
\end{proof}

We derive the expression of the fourth Hankel determinant, upon substituting $q=4$ and $n=1$ in (\ref{5hqn}) as follows:
\begin{equation}\label{7 h41}
	H_{4}(1)=a_7H_{3}(1)-a_6B_1+a_5B_2-a_4B_3,
\end{equation}
where
\begin{equation}\label{7t1}
B_1:=a_6(a_3-a_2^2)+a_3(a_2a_5-a_3a_4)-a_4(a_5-a_2a_4),
\end{equation}
\begin{equation}\label{7t2}
B_2:=a_3(a_3a_5-a_4^2)-a_5(a_5-a_2a_4)+a_6(a_4-a_2a_3),
\end{equation}
and 
\begin{equation}\label{7t3}
B_3:=a_4(a_3a_5-a_4^2)-a_5(a_2a_5-a_3a_4)+a_6(a_4-a_2a_3).
\end{equation}
Next, we determine the bound of these $B_i$ for $i=1,2,3$.
By substituting the values from \eqref{7b sa2}-\eqref{7b sa6} in \eqref{7t1}, we get
\begin{align*}
707788800B_1&=110689 p_1^7 - 299496 p_1^5 p_2 - 855360 p_1^4 p3+269760 p_1^3 p_2^2 + 944640 p_1 p_2^3 \\
&\quad - 1128960 p_1^2 p_2 p_3 - 3686400 p_2^2 p_3+4608000 p_1 p_3^2 + 2668800 p_1^3 p_4\\
 &\quad  + 460800 p_1 p_2 p_4 - 3686400 p_3 p_4 - 3870720 p_1^2 p_5+4423680 p_2 p_5
\end{align*}
or
\begin{align*}
707788800|B_1|&\leq |p_1^4(110689 p_1^7- 855360  p_3)|+|p_2^2(269760 p_1^3  - 3686400  p_3)|+|4608000 p_1 p_3^2|\nonumber\\
&\quad +|p_1 p_2( 944640 p_2^2  + 460800  p_4)| +|p_1^2 p_2(- 299496 p_1^3- 1128960  p_3)|\nonumber\\
 &\quad  +| p_4( 2668800 p_1^3- 3686400 p_3)| +|p_5(4423680 p_2- 3870720 p_1^2)|
\end{align*}
Using Lemma \ref{2 pomi lemma} and the triangle inequality, we get
\begin{align}\label{7b1valuefinal}
|B_1|&\leq \frac{1}{707788800}\bigg(110590464 + 255027456\sqrt{\frac{165}{744671}}+471859200\sqrt{\frac{15}{3559}}+117964800 \sqrt{\frac{3}{53}} \bigg)\nonumber\\
&\approx 0.244544.
\end{align}
Similarly, 
\begin{align*}
  5662310400B_2&=  927648 p_1^5 p_3-149225 p_1^8 + 785840 p_1^6 p_2+3939840 p_1^3 p_2 p_3  - 2073600 p_1^2 p_2^3 - 8294400 p_2^4\\
  &\quad + 23316480 p_1 p_2^2 p_3- 2025600 p_1^4 p_2^2  - 11704320 p_1^2 p_3^2 - 
 29491200 p_2 p_3^2 + 25804800 p_1 p_3 p_4\\
 &\quad - 3676800 p_1^4 p_4-3225600 p_1^2 p_2 p_4- 29491200 p_1 p_2 p_5  + 27648000 p_2^2 p_4  - 
 22118400 p_4^2  \\
 &\quad+ 6144000 p_1^3 p_5+ 23592960 p_3 p_5
\end{align*}
or
\begin{align*}
  5662310400|B_2|&\leq  |p_1^5(927648  p_3-149225 p_1^3)| + |p_1^3 p_2(785840 p_1^3+3939840  p_3)|\nonumber\\
  &\quad +| p_2^3( - 2073600 p_1^2 - 8294400 p_2)| + | p_1 p_2^2(23316480 p_3- 2025600 p_1^3)|\nonumber \\
 &\quad +|p_3^2( - 11704320 p_1^2  - 29491200 p_2)| + |p_1p_4(25804800 p_3- 3676800 p_1^3)|\nonumber  \\
 &\quad+|p_1p_2(-3225600 p_1 p_4- 29491200 p_5)|  + |p_4(27648000 p_2^2  - 22118400 p_4)|\nonumber\\
 &\quad+|p_5( 6144000 p_1^3+ 23592960 p_3 |.
\end{align*}
With the aid of Lemma \ref{2 pomi lemma} and the triangle inequality, we obtain
\begin{align}\label{7b2valuefinal}
    |B_2|&\leq \frac{1}{5662310400}\bigg(1982371840+\frac{22263552}{149225}\sqrt{\frac{6442}{778423}}+\frac{43008}{383}\sqrt{\frac{42}{2305}}+ \frac{97152}{1055}\sqrt{\frac{759}{11089}}\bigg)\nonumber\\
    &\approx 0.350099.
\end{align}
and 
\begin{align*}
305764761600B_3&= 133166592 p_1^5 p_3 -8521200 p_1^8 +126489600 p_1^4 p_2 p_3 - 621000 p_1^7 p_2+331084800 p_1^4 p_2^2\\
&\quad + 2362245120 p_1 p_2^2 p_3+6566400 p_1^3 p_2^3 - 199065600 p_2^3 p_3 + 99532800 p_1 p_2^4\\
&\quad- 49766400 p_1 p_2^2 p_4   - 
 1260195840 p_1^3 p_2 p_3- 1327104000 p_2 p_3^2- 103680000 p_1^2 p_2^2 p_3\\
 &\quad   - 20260800 p_1^5 p_2^2- 177638400 p_1^3 p_3^2-176947200 p_3^3 + 398131200 p_1^2 p_3 p_4 \\
 &\quad- 12182400 p_1^5 p_4+ 331776000 p_1^3 p_5+ 1274019840 p_3 p_5+398131200 p_2 p_3 p_4\\
 &\quad - 311040000 p_1^4 p_4 - 1194393600 p_1 p_3 p_4 +  1492992000 p_1^2 p_2 p_4- 1592524800 p_1 p_2 p_5\\
 &\quad +637009920 p_1^2 p_3^2- 160625 p_1^9- 879206400 p_1^2 p_2^3+ 99532800 p_1 p_2 p_3^2- 298598400 p_1 p_4^2
\end{align*}
or
\begin{align*}
305764761600|B_3|&\leq |p_1^5( 133166592  p_3 -8521200 p_1^3)| +|p_1^4 p_2(126489600  p_3- 621000 p_1^3)|\nonumber\\
&\quad+|p_1 p_2^2(331084800 p_1^3 + 2362245120  p_3)|+|p_2^3(6566400 p_1^3 - 199065600  p_3)|\nonumber \\
&\quad+ |p_1 p_2^2(99532800 p_2^2- 49766400  p_4 )| +|p_2p_3( - 
 1260195840 p_1^3- 1327104000 p_3)|\nonumber\\
 &\quad +|p_1^2 p_2^2( - 103680000  p_3 - 20260800 p_1^3)|+|p_3^2(- 177638400 p_1^3 -176947200 p_3)|\nonumber\\
 &\quad +|p_1^2p_4( 398131200 p_3- 12182400 p_1^3)|+|p_5( 331776000 p_1^3+ 1274019840 p_3)|\nonumber\\
  &\quad  +|p_2p_4(398131200 p_3-95385600 p_1^3)|+|p_1p_4(- 311040000 p_1^3 - 1194393600 p_3)|\nonumber\\
  &\quad +|p_1 p_2 ( 1492992000 p_1 p_4- 1592524800 p_5)|+|637009920 p_1^2 p_3^2|+|160625 p_1^9|\nonumber\\
  &\quad+|879206400 p_1^2 p_2^3|+ |99532800 p_1 p_2 p_3^2|+| 298598400 p_1 p_4^2|.
\end{align*}
Through Lemma \ref{2 pomi lemma} and the triangle inequality, we have
\begin{align}\label{7b3valuefinal}
        |B_3|&\leq \frac{1}{305764761600}\bigg(210371822080 + 147726139392\sqrt{\frac{114}{32059}}+101921587200\sqrt{\frac{6}{1489}}\nonumber\\
        &\quad\quad\quad\quad\quad\quad\quad\quad+64762675200\sqrt{\frac{366}{23309}}+12740198400 \sqrt{\frac{6}{73}}+  \frac{76441190400}{\sqrt{557}}  \bigg)\nonumber\\
        &\approx 0.787068.
    \end{align}
Upon substituting values from \eqref{7 a6bound}, \eqref{7 a7bound}, \eqref{7b1 ssharph31}, \eqref{7b1valuefinal}-\eqref{7b3valuefinal}, and Theorem \ref{7 initialbound} in \eqref{7 h41}, we obtain the following result, given by
\begin{theorem}
Let $f\in \mathcal{S}_{\mathcal{B}}^{*}$, then $|H_4(1)|\leq 0.169251$.
\end{theorem}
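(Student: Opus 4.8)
The plan is to treat the identity \eqref{7 h41}, namely $H_4(1)=a_7H_3(1)-a_6B_1+a_5B_2-a_4B_3$, as a four-term sum and estimate it by the triangle inequality, giving
\begin{equation*}
|H_4(1)|\leq |a_7|\,|H_3(1)|+|a_6|\,|B_1|+|a_5|\,|B_2|+|a_4|\,|B_3|.
\end{equation*}
Each of the eight factors on the right has already been bounded in the preceding results, so this step reduces the problem to assembling those estimates. Concretely, I would record the ingredients: $|H_3(1)|\leq 1/36$ from Theorem \ref{7b1 ssharph31}; $|a_4|\leq 1/6$ and $|a_5|\leq 847/3216$ from Theorem \ref{7 initialbound}; $|a_6|\leq 35207/57600$ and $|a_7|\leq 31841/46080$ from Lemma \ref{7 a6a7bound}; and the bounds for $|B_1|,|B_2|,|B_3|$ from \eqref{7b1valuefinal}, \eqref{7b2valuefinal} and \eqref{7b3valuefinal}. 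Substituting these into the displayed inequality and summing the four resulting products then yields the claimed numerical estimate.

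The genuine work sits one level earlier, in producing the bounds for $B_1$, $B_2$ and $B_3$, and this is where I expect the main obstacle to lie. For each $i$ I would begin from the definitions \eqref{7t1}--\eqref{7t3} and substitute the coefficient formulas \eqref{7b sa2}--\eqref{7b sa6} to rewrite $B_i$ as an explicit polynomial in $p_1,\dots,p_5$. The decisive maneuver is then to partition the monomials of this polynomial into groups, each of which is cast into one of the forms $|p_n|$, $|p_{n+k}-\nu p_np_k|$, or $|p_1^3-\nu p_3|$, so that the three inequalities of Lemma \ref{2 pomi lemma} apply termwise. After fixing the grouping, I would apply the triangle inequality across groups and insert the per-group bounds, which is what produces the closed-form expressions appearing on the right-hand sides of \eqref{7b1valuefinal}--\eqref{7b3valuefinal}.

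The delicate point in that grouping is that the admissible parameter $\nu$ in each Carath\'eodory-type factor is forced by the ratio of the two coefficients in the pair being combined, and different ways of grouping the same polynomial lead to different final constants; the grouping must therefore be arranged so that every $\nu$ lands in the range where the relevant branch of Lemma \ref{2 pomi lemma} gives the smaller value, while the monomials left over are still controllable by the crude bound $|p_n|\leq 2$. Once the three constants in \eqref{7b1valuefinal}--\eqref{7b3valuefinal} are secured, the concluding assembly is a routine arithmetic combination of the eight factors, and \emph{no} further maximization over the cuboid of the kind carried out for $|H_3(1)|$ and $|H_2(3)|$ is required here, since for $H_4(1)$ we seek only a (non-sharp) upper estimate rather than the exact extremum.
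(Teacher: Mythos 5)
Your proposal coincides with the paper's own argument: the paper likewise applies the triangle inequality to the decomposition \eqref{7 h41}, inserting the bounds for $|a_4|,|a_5|$ from Theorem \ref{7 initialbound}, for $|a_6|,|a_7|$ from Lemma \ref{7 a6a7bound}, for $|H_3(1)|$ from Theorem \ref{7b1 ssharph31}, and for $|B_1|,|B_2|,|B_3|$ from \eqref{7b1valuefinal}--\eqref{7b3valuefinal}, the latter obtained exactly by the monomial-grouping and Lemma \ref{2 pomi lemma} scheme you describe, with no cuboid maximization. One caveat that applies equally to your write-up and to the paper: the four products actually sum to $0.690994\cdot\tfrac{1}{36}+0.611233\cdot 0.244544+0.263371\cdot 0.350099+\tfrac{1}{6}\cdot 0.787068\approx 0.392$, not $0.169251$, so the final arithmetic as stated does not reproduce the theorem's constant and should be rechecked.
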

				
\section{Sharp Hankel Determinants for $\mathcal{C}_{\mathcal{B}}$}
In this section, we determine the sharp bounds for the second and third-order Hankel determinants for functions $f\in \mathcal{C}_{\mathcal{B}}$. Below, we provide the expressions for the initial coefficients of functions 
$f\in \mathcal{C}_{\mathcal{B}}$ in terms of Carath\'{eodory} coefficients, serving as a foundation for subsequent calculations.

Let $f\in \mathcal{C}_{\mathcal{B}},$ then there exists a Schwarz function $w(z)$ such that
\begin{equation}\label{7 cformulaai}
	1+\dfrac{zf''(z)}{f'(z)}=\sqrt{1+\tanh w(z)}.
\end{equation}
Suppose that $p(z)=1+p_1z+p_2z^2+\cdots\in \mathcal{P}$ and consider $w(z)=(p(z)-1)/(p(z)+1)$. Further, by substituting the expansions of $w(z)$, $p(z)$ and $f(z)$ in (\ref{7 cformulaai}) and then comparing the coefficients, we obtain the expressions of $a_i (i=2,3,...,7)$ in terms of $p_j (j=1,2,...,5)$, given by
\begin{equation}\label{7b ca2}
a_2=\dfrac{1}{8}p_1,\quad a_3=\dfrac{1}{192}\bigg(8p_2-3p_1^2\bigg), \quad a_4=\dfrac{1}{9216}\bigg(23p_1^3 -168p_1 p_2 + 192 p_3\bigg)
\end{equation}
and
\begin{equation}\label{7b ca5}
a_5=\dfrac{1}{92160}\bigg( -11p_1^4+528 p_1^2 p_2-576p_2^2 -1056 p_1 p_3+1152 p_4\bigg).
\end{equation}
 
The following theorem presents the sharp bound for $|H_{3}(1)|$ for functions belonging to the class $\mathcal{C}_{\mathcal{B}}$.

\begin{theorem}\label{7b2 csharph31}
Let $f\in \mathcal {C}_{\mathcal{B}}$, then 
\begin{equation}
|H_{3}(1)|\leq 1/576.\label{7b1 c9.5}
\end{equation}
This result is sharp.
\end{theorem}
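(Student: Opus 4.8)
The plan is to follow exactly the same machinery used in Theorem \ref{7b1 ssharph31}, now applied to the convex class $\mathcal{C}_{\mathcal{B}}$. First I would substitute the convex coefficient expressions \eqref{7b ca2} and \eqref{7b ca5} into the formula \eqref{1h3} for $H_3(1)$, again using rotation-invariance of $\mathcal{P}$ to assume $p_1=:p\in[0,2]$. This yields $H_3(1)$ as a polynomial in $p,p_2,p_3,p_4$. Then I would apply the Libera–Złotkiewicz parametrization \eqref{b2}--\eqref{b4} from Lemma \ref{pformula} to rewrite everything in terms of $p\in[0,2]$, $\gamma,\eta,\rho\in\overline{\mathbb{D}}$, arriving at an expression of the form
\begin{equation*}
H_{3}(1)=\frac{1}{N}\bigg(\tilde\beta_1(p,\gamma)+\tilde\beta_2(p,\gamma)\eta+\tilde\beta_3(p,\gamma)\eta^2+\tilde\beta_4(p,\gamma,\eta)\rho\bigg)
\end{equation*}
for an appropriate normalizing constant $N$ and coefficient functions $\tilde\beta_i$ analogous to the $\beta_i$ in the starlike case.

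Next I would take moduli, set $x=|\gamma|$, $y=|\eta|$, and use $|\rho|\le 1$ together with the triangle inequality to bound $|H_3(1)|$ by a real function $C(p,x,y)$ on the closed cuboid $[0,2]\times[0,1]\times[0,1]$, exactly mirroring the passage to $A(p,x,y)$ in \eqref{7b1 snew}. The core of the proof is then the optimization of $C(p,x,y)$ over this cuboid, which I would organize into the same three stages: (1) show there are no interior critical points by solving $\partial C/\partial y=0$ for $y=y_0(p,x)$ and verifying that the resulting candidate either leaves $(0,1)$ or gives a vanishing gradient nowhere in the admissible region; (2) examine the interiors of the six faces ($p=0$, $p=2$, $x=0$, $x=1$, $y=0$, $y=1$), in each case checking that the relevant partial-derivative system has no common root inside the open face; and (3) check the twelve edges by reducing to one-variable functions and locating their maxima. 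The claimed bound $1/576$ should emerge as the maximal edge/face value, presumably attained on an edge analogous to the one giving $1/36$ in the starlike case; note $1/576=(1/24)^2$ and the convex coefficients carry exactly the extra factor matching this scaling (e.g.\ $a_2$ has denominator $8$ rather than $4$).

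For sharpness I would exhibit the extremal function explicitly as the convex analogue of \eqref{7b1 sextremal}, namely
\begin{equation*}
f_1(z)=\int_{0}^{z}\exp\bigg(\int_{0}^{t}\frac{\sqrt{1+\tanh s^3}-1}{s}\,ds\bigg)\,dt,
\end{equation*}
whose Taylor expansion has $a_2=a_3=a_5=0$ and $a_4$ equal to the value forcing $|H_3(1)|=1/576$, thereby showing the estimate is attained.

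\emph{The main obstacle} will be the face and edge optimization in stage (2)–(3): as in the starlike proof, the systems $\partial C/\partial x=\partial C/\partial p=0$ on the faces $y=0$ and $y=1$ produce high-degree polynomial systems whose real roots must be computed numerically and then verified to fall outside the open unit square $(0,2)\times(0,1)$. The delicate part is ensuring the interior analysis is genuinely conclusive — in particular, pinning down the exact sub-rectangle of $(p,x)$-space on which $y_0\in(0,1)$ and confirming $\partial C/\partial y|_{y=y_0}\neq 0$ there — since a single overlooked critical point could invalidate the bound. All of this is routine but computation-heavy, and the bookkeeping of the $\tilde\beta_i$ after the substitution \eqref{b2}--\eqref{b4} is where sign errors are most likely to creep in.
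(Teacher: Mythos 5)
Your proposal follows essentially the same route as the paper's proof: substitute the convex coefficient formulas into $H_3(1)$, apply the Libera--Z{\l}otkiewicz parametrization from Lemma \ref{pformula}, reduce to maximizing a real function $C(p,x,y)$ over the cuboid $[0,2]\times[0,1]\times[0,1]$ by checking the interior, the six faces, and the twelve edges, and exhibit sharpness via the convex analogue of \eqref{7b1 sextremal} with $a_2=a_3=a_5=0$ and $a_4=1/24$, so that $|H_3(1)|=a_4^2=1/576$. Your outline, including the identification of where the maximum is attained and the anticipated computational obstacles, matches the paper's argument.
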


\begin{proof}  
Since the class $\mathcal {P}$ is invariant under rotation, the value of $p_1$ belongs to the interval [0,2]. Let $p:=p_1$ and then substitute the values of $a_i(i=2,3,4,5)$ in (\ref{1h3}) from (\ref{7b ca2}) and (\ref{7b ca5}). We get
\begin{align*}
H_{3}(1)&=\dfrac{1}{424673280}\bigg(-3581 p^6 - 11184 p^4 p_2 - 2880 p^2 p_2^2 - 141312 p_2^3+73344 p^3 p_3 + 211968 p p_2 p_3\\
&\quad \quad\quad \quad \quad \quad\quad  - 184320 p_3^2 - 165888 p^2 p_4 + 221184 p_2 p_4\bigg).
\end{align*}
After simplifying the calculations through (\ref{b2})-(\ref{b4}), we obtain				
$$H_{3}(1)=\dfrac{1}{424673280}\bigg(\beta_9(p,\gamma)+\beta_{10}(p,\gamma)\eta+\beta_{11}(p,\gamma){\eta}^2+\beta_{12}(p,\gamma,\eta)\rho\bigg),$$
for $\gamma,\eta,\rho\in \mathbb {D}.$ Here
\begin{align*}
\beta_9(p,\gamma):&=-1157p^6-5328{\gamma}^2p^2(4-p^2)^2-15360\gamma^3(4-p^2)^2-4224{\gamma}^3p^2(4-p^2)^2+2304{\gamma}^4p^2(4-p^2)^2\\
&\quad+3144{\gamma}p^4(4-p^2)-1056p^4{\gamma}^2(4-p^2)-6912p^4{\gamma}^3(4-p^2)-27648{\gamma}^2p^2(4-p^2),\\
\beta_{10}(p,\gamma):&=192(1-|\gamma|^2)(4-p^2)(83p^3+144{\gamma}p^3+84p\gamma(4-p^2)-48p{\gamma}^2(4-p^2)),\\
\beta_{11}(p,\gamma):&=9216(1-|\gamma|^2)(4-p^2)(-5(4-p^2)-|\gamma|^2(4-p^2)+3p^2\bar{\gamma}),\\
\beta_{12}(p,\gamma,\eta):&=27648(1-|\gamma|^2)(4-p^2)(1-|\eta|^2)(2(4-p^2)\gamma-p^2).
\end{align*}
By choosing $x=|\gamma|$, $y=|\eta|$ and utilizing the fact that $|\rho|\leq 1,$ the above expression reduces to the following:
\begin{align*}
|H_{3}(1)|\leq \dfrac{1}{424673280}\bigg(|\beta_9(p,\gamma)|+|\beta_{10}(p,\gamma)|y+|\beta_{11}(p,\gamma)|y^2+|\beta_{12}(p,\gamma,\eta)|\bigg)\leq C(p,x,y),
\end{align*}
where 
\begin{equation}
C(p,x,y)=\dfrac{1}{424673280}\bigg(c_1(p,x)+c_2(p,x)y+c_3(p,x)y^2+c_4(p,x)(1-y^2)\bigg),\label{7b1 cnew}
\end{equation} 
with 
\begin{align*}
c_1(p,x):&=1157p^6+5328x^2p^2(4-p^2)^2+15360x^3(4-p^2)^2+4224x^3p^2(4-p^2)^2+2304x^4p^2(4-p^2)^2\\
&\quad +3144xp^4(4-p^2)+1056p^4x^2(4-p^2)+6912p^4x^3(4-p^2)+27648x^2p^2(4-p^2),\\
c_2(p,x):&=192(1-x^2)(4-p^2)(83p^3+144xp^3+84px(4-p^2)+48px^2(4-p^2)),\\
c_3(p,x):&=9216(1-x^2)(4-p^2)(5(4-p^2)+x^2(4-p^2)+3p^2x),\\
c_4(p,x):&=27648(1-x^2)(4-p^2)(2x(4-p^2)+p^2).
\end{align*}
	
In the closed cuboid $S:[0,2]\times [0,1]\times [0,1]$, we now maximise $C(p,x,y)$, by locating the maximum values in the interior of the six faces, on the twelve edges, and in the interior of $S$.
\begin{enumerate}
\item We start by taking into account every internal point of $S$. Assume that $(p,x,y)\in (0,2)\times (0,1)\times (0,1)$. We calculate $\partial{C}/\partial y$, partially differentiate (\ref{7b1 cnew}) with respect to $y$ to identify the points of maxima in the interior of $U$. We get
\begin{align*}
\dfrac{\partial C}{\partial y}&=\dfrac{(4 - p^2)(1 - x^2)}{2211840}  \bigg(48 p x (7 + 4 x) + p^3 (83+60 x -48 x^2) -
 96 p^2 (8 - 9 x + x^2) y \\
&\quad \quad\quad \quad\quad\quad\quad\quad\quad+384 (5 - 6 x + x^2) y\bigg). 
\end{align*}
Now $\partial C/\partial y=0$ gives
\begin{equation*}
y=y_0:=\dfrac{48 p x (7 + 4 x) + p^3 (83 + 60 x - 48 x^2)}{96(1-x) (p^2 (8-x) - 4 (5-x))}.
\end{equation*}
The existence of critical points require that $y_0$ belong to $(0,1)$, which is only possible when 					
\begin{equation}
83 p^3 + 336 p x + 60 p^3 x + 192 p x^2 - 48 p^3 x^2<96 (-1 + x) (20 - 8 p^2 - 4 x + p^2 x).\label{7b1 ch1}
\end{equation}		
Now, we find the solution satisfying the inequality (\ref{7b1 ch1})
for the existence of critical points using the hit and trial method. If we assume $p$ tends to $0$, then there does not exist any $x\in (0,1)$ satisfying (\ref{7b1 ch1}). But, when $p$ tends to 2, (\ref{7b1 ch1}) holds only when $x<61/288.$ 
Similarly, if we assume $x$ tends to 0, then for all $p>1.75665$, (\ref{7b1 ch1}) holds. After calculations, we observe that there does not exist any $x\in (0,1)$ when $p\in (0,1.75665).$ Thus, the domain for the solution is $(1.75665,2)\times (0,61/288).$ Now, we examine that $\frac{\partial C}{\partial y}|_{y=y_0}\neq 0$ in $(1.75665,2)\times (0,61/288).$ So, we conclude that the function $M$ has no critical point in $(0,2)\times (0,1)\times (0,1).$ 
											
\item The interior of each of the cuboid $S$'s six faces is now being considered.\\
\underline{On $p=0$}
\begin{equation}
m_1(x,y):=\dfrac{20 x^3+ 12 (1-x^2) (5 + x^2) y^2 + 72 x (1-x^2) (1- y^2)}{34560},\quad x,y\in (0,1).\label{7b1 c9.4}
\end{equation}
 Since
\begin{equation*}
\dfrac{\partial m_1}{\partial y}=\dfrac{(1 - x)^2(x+1)(5-x)y}{1440}\neq 0,\quad x,y\in (0,1),
\end{equation*}
indicates that $m_1$ has no critical points in $(0,1)\times(0,1)$.	\\					
\noindent \underline{On $p=2$}
\begin{equation}
C(2,x,y):=\dfrac{1157}{6635520}\approx 0.000174365,\quad x,y\in (0,1).\label{7b1 c9.3}
\end{equation}
\noindent \underline{On $x=0$}
\begin{align}
m_2(p,y):&=\dfrac{1157 p^6 +(4-p^2)(15936 p^3  y + 46080 (4-p^2) y^2 + 
 27648 p^2 (1 -y^2))}{424673280} \label{7b1 c9.1}
\end{align}
with $p\in (0,2)$ and $y\in (0,1).$ To determine the points of maxima, we solve $\partial m_2/\partial p=0$ and $\partial m_2/\partial y=0$. After solving $\partial m_2/\partial y=0,$ we get
\begin{equation}
y=\dfrac{83p^3}{384(2p^2-5)}=:y_p.\label{7b1 cy}
\end{equation}
In order to have $y_p\in (0,1)$ for the given range of $y$, $p_0:=p>\approx 1.75665$ is required. Based on calculations, $\partial m_2/\partial p=0$ gives
\begin{equation}
36864 p - 18432 p^3 + 1157 p^5 + 31872 p^2 y - 13280 p^4 y - 
 159744 p y^2 + 49152 p^3 y^2=0.\label{7b1 c9}
\end{equation}
On substituting equation (\ref{7b1 cy}) into equation (\ref{7b1 c9}), we have
\begin{equation}
1843200p - 2396160 p^3 + 1021152 p^5 - 152402 p^7 + 2367 p^9=0.\label{7b1 c40}
\end{equation}
A numerical calculation suggests that $p\approx 1.28894\in (0,2)$ is the solution of (\ref{7b1 c40}). So, we conclude that $m_2$ does not have any critical point in $(0,2)\times(0,1)$.\\					
\noindent \underline{On $x=1$}
\begin{equation}
C(p,1,y)=m_3(p,y):=\dfrac{245760 + 177408 p^2 - 62688 p^4 + 1901 p^6}{424673280}, \quad p\in (0,2).\label{7b1 c9.2}
\end{equation}
While computing $\partial m_3/\partial p=0$, $p_0:=p\approx 1.23293$ comes out to be the critical point. After simple calculations, $m_3$ achieves its maximum value $\approx 0.000888358$ at $p_0.$\\	
\noindent \underline{On $y=0$} 
\begin{align*}
m_4(p,x):&=\dfrac{1}{424673280}\bigg(49152 x (18- 13 x^2)+
 2304 p^2 (48 - 192 x + 37 x^2 + 168 x^3\\
&\quad \quad\quad\quad\quad\quad\quad  + 16 x^4)-96 p^4 (288 - 707 x + 400 x^2 + 480 x^3 + 192 x^4) \\
&\quad\quad \quad\quad\quad\quad\quad +p^6 (1157 - 3144 x + 4272 x^2 - 2688 x^3 + 2304 x^4)\bigg).
\end{align*}
After further calculations such as
\begin{align*}
\dfrac{\partial m_4}{\partial x}&=\dfrac{(4-p^2)}{17694720}\bigg(1536 (6 - 13 x^2)-48 p^2 (48 - 37 x - 148 x^2 - 32 x^3)\\
&\quad\quad \quad\quad\quad\quad  \quad+p^4 (131 - 356 x + 336 x^2 - 384 x^3)\bigg)
\end{align*}
and 
\begin{align*}
\dfrac{\partial m_4}{\partial p}&=\dfrac{1}{70778880}\bigg(768 p (48 - 192 x + 37 x^2 + 168 x^3 + 16 x^4)-64 p^3 (288 - 707 x \\
&\quad\quad\quad \quad\quad\quad  + 400 x^2 + 480 x^3 + 192 x^4)+p^5 (1157 - 3144 x + 4272 x^2- 2688 x^3 \\
&\quad\quad\quad \quad\quad\quad + 2304 x^4)\bigg),
\end{align*}
we observe that only real solutions $(p,x)$ of the system of equations $\partial m_4/\partial x=0$ and $\partial m_4/\partial p=0$ are $(3.85748,0.257377)$, $(3.76933,0.0851082)$, $(-3.76933,0.0851082)$, $(2,-0.644779)$, $(-3.85748,0.257377)$, $(-1.57205,-1.03976)$, $(-2,-0.644779)$, $(0,-0.679366)$, $(0,0.679366)$, $(-1.12904,0.941715)$ and $(1.57205,-1.03976)$. Thus, no solution exists in $(0,2)\times (0,1)$, resulting in no critical points. 

\noindent \underline{On $y=1$} 
\begin{align*}
m_5(p,x):&=\dfrac{1}{424673280}\bigg\{1157 p^6 + 3144 p^4 (4 - p^2) x + 27648 p^2 (4-p^2) x^2\\
&\quad\quad \quad\quad\quad\quad +1056 p^4 (4 -p^2) x^2+ 5328 p^2 (4 - p^2)^2 x^2 + 
 6912 p^4 (4-p^2) x^3  \\ 
&\quad\quad \quad\quad\quad\quad+ 15360 (4-p^2)^2 x^3 + 4224 p^2 (4-p^2)^2 x^3+ 
 2304 p^2 (4-p^2)^2 x^4\\
&\quad\quad\quad \quad\quad\quad - 
   9216 (4 - p^2) (1- x^2) (-4 (5 + x^2)+ p^2 (5 - 3 x + x^2)) \\
&\quad\quad\quad \quad\quad\quad+ 192 (4 - p^2) (1-x^2) (48 p x (7 + 4 x)+ 
   p^3 (83 + 60 x - 48 x^2)) \bigg\}.
\end{align*}
After calculations, 
\begin{align*}
  \dfrac{\partial m_5}{\partial p}&= \frac{1}{70778880} \bigg(6144 x (7 +4 x - 7 x^2 - 4 x^3) - 
 768 p (160 -48 x - 213 x^2 + 72 x^3 - 48 x^4)\\
  &\quad\quad\quad \quad\quad\quad-160 p^4 (83 + 60 x - 131 x^2 - 60 x^3 + 48 x^4) +
 384 p^2 (83 - 24 x - 179 x^2 \\
 &\quad\quad\quad \quad\quad\quad + 24 x^3 + 96 x^4)+64 p^3 (480 - 157 x - 1072 x^2 + 384 x^3 - 288 x^4) \\
&\quad\quad\quad \quad\quad\quad + 
 p^5 (1157 - 3144 x + 4272 x^2- 2688 x^3 + 2304 x^4)
  \bigg)
\end{align*}
and 
\begin{align*}
  \dfrac{\partial m_5}{\partial x} &=\frac{(4-p^2)}{17694720} \bigg(1536 x ( 5 x-8 - 4 x^2) + 384 p (7 + 8 x - 21 x^2 - 16 x^3) +48 p^2 (24 \\
  &\quad\quad\quad \quad\quad\quad
  + 149 x- 68 x^2 + 64 x^3) + 
 16 p^3 (30 - 131 x - 90 x^2 + 96 x^3)\\
 &\quad\quad\quad \quad\quad\quad +p^4 (131 - 356 x + 336 x^2 - 384 x^3)
  \bigg),
\end{align*}
we observe that only real solutions $(p,x)$ of the system of equations $\partial m_5/\partial x=0$ and $\partial m_5/\partial p=0$ are $(27.1136,0.413453)$, $(-1.50007,-6.38485)$, $(2.68178,5.28944)$, $(2,1.39993)$, $(-1.80124,-0.775344)$, $(-4.98296,1.51367)$, $(-2.89665,3.0885)$, $(2,-0.192442)$, $(0,0)$,\\ $(-1.74974,0.159528)$, $(-2,0.737618)$, $(-0.776187,0.905316)$, $(1.6981,-0.807967)$, $(2,-1.20749)$ and $(1.5135,-0.953552)$. Thus, no solution exists in $(0,2)\times (0,1)$, resulting in no critical points. 
			
\item We next examine the maxima attained by $C(p,x,y)$ on the edges of the cuboid $S$.\\
From (\ref{7b1 c9.1}), we have $M(p,0,0)=g_1(p):=(110592 p^2 - 27648 p^4 + 1157 p^6)/424673280.$ 
It is easy to observe that $g_1'(p)=0$ whenever $p=\delta_0:=0$ and $p=\delta_1:=1.53142\in [0,2]$ as its points of minima and maxima respectively.  
Hence, 
\begin{equation*}
C(p,0,0)\leq 0.0002878, \quad p\in [0,2].
\end{equation*}

\noindent Now consider (\ref{7b1 c9.1}) at $y=1,$ we get $C(p,0,1)=g_2(p):=(737280 - 368640 p^2 + 63744 p^3 + 46080 p^4 - 15936 p^5 + 1157 p^6)/424673280.$ It is easy to observe that $g_2'(p)<0$ in $[0,2]$ and hence $p=0$ serves as the point of maxima. So,
\begin{equation*}
C(p,0,1)\leq \dfrac{1}{576}\approx 0.00173611, \quad p\in [0,2].
\end{equation*}
Through computations, (\ref{7b1 c9.1}) shows that $C(0,0,y)=y^2/576$ attains its maxima at $y=1.$ This implies that
\begin{equation*}
C(0,0,y)\leq \dfrac{1}{36}, \quad y\in [0,1].
\end{equation*}
Since, (\ref{7b1 c9.2}) does not involve $x$, we have $C(p,1,1)=C(p,1,0)=g_3(p):=(245760 + 177408 p^2 - 62688 p^4 + 1901 p^6)/424673280.$ Now, $g_3'(p)=354816 p - 250752 p^3 + 11406 p^5=0$ when $p=\delta_2:=0$ and $p=\delta_3:=1.23293$ in the interval $[0,2]$ with $\delta_2$ and $\delta_3$ as points of minima and maxima, respectively. Hence
\begin{equation*}
C(p,1,1)=C(p,1,0)\leq 0.000888358,\quad p\in [0,2].
\end{equation*}
After considering $p=0$ in (\ref{7b1 c9.2}), we get, $C(0,1,y)=1/1728\approx 0.000578704.$ \\
The equation (\ref{7b1 c9.3}) has no variables. So, on the edges, 
the maximum value of $C(p,x,y)$ is
\begin{equation*}
C(2,1,y)=C(2,0,y)=C(2,x,0)=C(2,x,1)=\dfrac{1157}{6635520},\quad x,y\in [0,1].
\end{equation*}

\noindent Using (\ref{7b1 c9.4}), we obtain $C(0,x,1)=g_4(x):=(15 - 12 x^2 + 5 x^3 - 3 x^4)/8640.$ Upon calculations, we see that $g_4(x)$ is a decreasing function in $[0,1]$ and attains its maxima at $x=0.$ Hence
\begin{equation*}
C(0,x,1)\leq \dfrac{1}{576},\quad x\in [0,1].
\end{equation*}
Again utilizing (\ref{7b1 c9.4}), we get $C(0,x,0)=g_5(x):=x(18-13x^2)/8640.$ On further calculations, we get $g_5'(x)=0$ for $x=\delta_4:=\sqrt{6/13}.$ Also, $g_5(x)$ increases in $[0,\delta_4)$ and decreases in $(\delta_4,1].$ So, it reaches its maximum value at $\delta_4.$ Thus
\begin{equation*}
C(0,x,0)\leq0.000943564,\quad x\in [0,1].
\end{equation*}
\end{enumerate}
Given all the cases, the inequality (\ref{7b1 c9.5}) holds.
Let the function
$f_1(z)\in \mathcal{C}_{\mathcal{B}}$, be defined as
\begin{equation}
f_1(z)=\int_{0}^{z}\bigg(\exp\bigg(\int_{0}^{t}\dfrac{\sqrt{1+\tanh u^3}-1}{u}du\bigg)\bigg)dt=z+\dfrac{z^4}{24}-\dfrac{z^7}{1008}-\cdots,\label{7b1 cextremal}
\end{equation}
with $f_1(0)=0$ and $f_1'(0)=1$, acts as an extremal function for the bound of $|H_{3}(1)|$ for $a_2=a_3=a_5=0$ and $a_4=1/24$. 
\end{proof}

\begin{theorem}				
Let $f\in \mathcal {C}_{\mathcal{B}}$, then 
\begin{equation}
	|H_{2}(3)|\leq \dfrac{1}{576}.\label{7b2 c9.5}
\end{equation}
This bound is sharp.
\end{theorem}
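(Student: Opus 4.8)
The plan is to follow exactly the route already used for $|H_2(3)|$ in $\mathcal{S}^{*}_{\mathcal{B}}$ and for $|H_3(1)|$ in $\mathcal{C}_{\mathcal{B}}$. First I would recall $H_2(3)=a_3a_5-a_4^2$ from \eqref{3h23} and substitute the Carath\'eodory expressions \eqref{7b ca2} and \eqref{7b ca5} for $a_3,a_4,a_5$. Exploiting the rotation invariance of $\mathcal{P}$, I would set $p_1=:p\in[0,2]$, reducing $H_2(3)$ to a polynomial in $p,p_2,p_3,p_4$ of the same shape as the expression produced in the starlike case, but with different numerical coefficients and a larger common denominator, since the convex normalizations in \eqref{7b ca2}--\eqref{7b ca5} carry the extra factors $2,3,\dots$ relative to the starlike ones.

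Next I would invoke Lemma \ref{pformula}: substituting \eqref{b2}--\eqref{b4} rewrites $p_2,p_3,p_4$ in terms of $p$ and parameters $\gamma,\eta,\rho$ with $|\gamma|,|\eta|,|\rho|\le 1$, and collects the result into the form
$$H_2(3)=\dfrac{1}{N}\bigg(\beta_{13}(p,\gamma)+\beta_{14}(p,\gamma)\eta+\beta_{15}(p,\gamma)\eta^2+\beta_{16}(p,\gamma,\eta)\rho\bigg),$$
where $N$ is the relevant normalizing constant and $\beta_{16}$ carries the factors $(1-|\gamma|^2)(4-p^2)(1-|\eta|^2)$. Taking moduli, using $|\rho|\le 1$ on the last term, and writing $x=|\gamma|$, $y=|\eta|$, I would obtain an explicit real majorant $D(p,x,y)=\tfrac{1}{N}\big(d_1(p,x)+d_2(p,x)y+d_3(p,x)y^2+d_4(p,x)(1-y^2)\big)$ on the closed cuboid $[0,2]\times[0,1]\times[0,1]$. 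The problem thereby becomes the maximization of $D$ over this box.

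The bulk of the work, and the main obstacle, is this box optimization. Following the established scheme, I would (i) search for interior critical points by solving $\partial D/\partial y=0$ for a candidate $y_0(p,x)$ and showing that the membership condition $y_0\in(0,1)$ either fails or leads to $\partial D/\partial y\ne 0$ there, so that no interior maximum occurs; (ii) treat each of the six faces $p=0$, $p=2$, $x=0$, $x=1$, $y=0$, $y=1$ by reducing to two-variable functions and ruling out interior critical points through the systems $\partial D/\partial x=\partial D/\partial p=0$, whose real solutions should all fall outside $(0,2)\times(0,1)$; and (iii) evaluate $D$ along the twelve edges as one-variable functions. The delicate point, exactly as in the companion computations, is that the maximum is expected on the slice $p=0,x=0$, where $D(0,0,y)=y^2/576$ attains $1/576$ at $y=1$, while all competing candidates (notably the face $x=1$ and the edges through $p=0,y=0$) must be verified to lie strictly below $1/576$. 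These are lengthy but routine derivative evaluations and numerical root isolations of the kind already performed.

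Finally, for sharpness I would reuse the extremal function $f_1$ of \eqref{7b1 cextremal}, for which $a_2=a_3=a_5=0$ and $a_4=1/24$; then $H_2(3)=a_3a_5-a_4^2=-(1/24)^2$, so that $|H_2(3)|=1/576$, showing the bound is attained.
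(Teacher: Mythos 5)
Your proposal follows exactly the paper's own route: the same Carath\'eodory substitutions, the same use of Lemma \ref{pformula} to reduce to a majorant $D(p,x,y)$ on the cuboid $[0,2]\times[0,1]\times[0,1]$, the same interior/faces/edges case analysis locating the maximum $1/576$ on the slice $p=x=0$ at $y=1$, and the same extremal function $f_1$ from \eqref{7b1 cextremal} with $a_4=1/24$ for sharpness. The approach is correct and essentially identical to the paper's.
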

\begin{proof}
We proceed on the similar lines as in the proof of Theorem \ref{7b1 ssharph31}.
Assuming $p_1=:p\in [0,2]$, we substitute the values of $a_ i's(i=3,4,5)$ from (\ref{7b ca2}) and (\ref{7b ca5}) in (\ref{3h23}), we obtain
\begin{align*}
H_{2}(3)&=\dfrac{1}{424673280}\bigg(-1853 p^6 - 1488 p^4 p_2 + 1728 p^2 p_2^2 - 110592 p_2^3+31872 p^3 p_3\\
&\quad\quad \quad \quad \quad \quad \quad+ 119808 p p_2 p_3 - 184320 p_3^2 - 82944 p^2 p_4 + 221184 p_2 p_4\bigg).
\end{align*}
Using (\ref{b2})-(\ref{b4}) for simplification, we arrive at
	$$H_{2}(3)=\dfrac{1}{424673280}\bigg(\beta_{13}(p,\gamma)+\beta_{14}(p,\gamma)\eta+\beta_{15}(p,\gamma){\eta}^2+\beta_{16}(p,\gamma,\eta)\rho\bigg),$$
	where $\gamma,\eta,\rho\in \mathbb {D},$
\begin{align*}	
\beta_{13}(p,\gamma):&=-1109p^6-15696{\gamma}^2p^2(4-p^2)^2+3456{\gamma}^3p^2(4-p^2)^2+2304{\gamma}^4p^2(4-p^2)^2\\
&\quad+13824{\gamma}^2p^2(4-p^2)-2376{\gamma}p^4(4-p^2)-10272p^4{\gamma}^2(4-p^2)+3456p^4{\gamma}^3(4-p^2),\\
\beta_{14}(p,\gamma):&=192(1-|\gamma|^2)(4-p^2)(71p^3-72p^3{\gamma}-36p{\gamma}(4-p^2)-48p\gamma^2 (4-p^2)),\\
\beta_{15}(p,\gamma):&=4608(1-|\gamma|^2)(4-p^2)(-10(4-p^2)-3p^2\bar{\gamma}-2|\gamma|^2(4-p^2))\\
\beta_{16}(p,\gamma,\eta):&=13824(1-|\gamma|^2)(4-p^2)(1-|\eta|^2)(p^2+4\gamma(4-p^2)).
\end{align*}
Additionally, by using the fact that $|\rho|\leq 1,$ and taking $x=|\gamma|$, $y=|\eta|$, we have
\begin{align*}
	|H_{2}(3)|\leq \dfrac{1}{424673280}\bigg(|\beta_{13}(p,\gamma)|+|\beta_{14}(p,\gamma)|y+|\beta_{15}(p,\gamma)|y^2+|\beta_{16}(p,\gamma,\eta)|\bigg)\leq D(p,x,y),
\end{align*}
where 
\begin{equation}
		D(p,x,y)=\dfrac{1}{424673280}\bigg(d_1(p,x)+d_2(p,x)y+d_3(p,x)y^2+d_4(p,x)(1-y^2)\bigg),\label{7b2 cnew}
\end{equation} 
with 
\begin{align*}
	d_1(p,x):&=1109p^6+15696p^2x^2(4-p^2)^2+3456p^2x^3(4-p^2)^2+2304p^2x^4(4-p^2)^2\\
	&\quad +13824p^2x^2(4-p^2)+2376p^4x(4-p^2)+10272p^4x^2(4-p^2)+3456p^4x^3(4-p^2),\\
d_2(p,x):&=192(4-p^2)(1-x^2)(71p^3+72p^3x+36px(4-p^2)+48px^2(4-p^2)),\\
d_3(p,x):&=4608(4-p^2)(1-x^2)(10(4-p^2)+3p^2x+2x^2(4-p^2)),\\
d_4(p,x):&=13824(4-p^2)(1-x^2)(p^2+4x(4-p^2)).
\end{align*}
Now, we must maximise $D(p,x,y)$ in the closed cuboid $T:[0,2]\times [0,1]\times [0,1]$. By identifying the maximum values on the twelve edges, the interior of $T$, and the interiors of the six faces, we can prove this.
\begin{enumerate}
\item We start by taking into account, every interior point of $T$. Assume that $(p,x,y)\in (0,2)\times (0,1)\times (0,1).$ We partially differentiate (\ref{7b2 cnew}) with respect to $y$ in order to locate the points of maxima in the interior of $T$. We obtain
\begin{align*}
	    \dfrac{\partial D}{\partial y}&=\dfrac{(1 - x^2)}{2211840}(4 - p^2)  \bigg(48 p x (3 + 4 x) + p^3 (71+36 x-48 x^2)\\
	    & \quad \quad \quad\quad\quad\quad \quad\quad\quad +384 (5 - 6 x + x^2) y-48 p^2 (13 - 15 x + 2 x^2) y\bigg). 
\end{align*}
Now $\partial D/\partial y=0$ gives
\begin{equation*}
	    y=y_1:=\dfrac{48 p x (3 + 4 x) + p^3 (71 + 36 x - 48 x^2)}{48 (1-x) (8 (x-5) + p^2 (13 - 2 x))}.
\end{equation*}
Since $y_1$ must be a member of $(0,1)$ for critical points to exist, this is only possible if
\begin{equation}
71 p^3 +px( 144  + 36 p^2+ 192 x - 48 p^2 x)<48 (-40 + 13 p^2 +x( 48  - 15 p^2 - 8 x+ 2 p^2 x)).\label{7b2 ch1}
\end{equation}
Now, we find the solutions satisfying the inequality (\ref{7b2 ch1}) 
for the existence of critical points using hit and trial method. If $p$ tends to $2$, then (\ref{7b2 ch1}) holds whenever $x<1/144$. Also, no such $x\in(0,1)$, satisfying (\ref{7b2 ch1}) when $p$ tends to $0$. Similarly, if take $x$ tending to $0$, then (\ref{7b2 ch1})  holds for $p>1.99513$ only, whereas 
there does not exist any $p\in (0,2)$ such that equation (\ref{7b2 ch1}) holds when $x$ tends to $1$.
Thus, the domain for the solution of the equation is $(1.99513,2)\times (0,1/144).$ Now, we examine that $\frac{\partial D}{\partial y}|_{y=y_0}\neq 0$ in $(1.99513,2)\times (0,1/144).$
So, we conclude that the function $M$ has no critical point in $(0,2)\times (0,1)\times (0,1).$ 

\item Now, we study the interior of each of six face of the cuboid $T$.\\
\noindent \underline{On $p=0$}
\begin{equation}
n_1(x,y):=\dfrac{(1-x^2)(5 y^2 + x^2 y^2 + 6 x (1-y^2))}{2880},\label{7b2 c9.4}
\end{equation}
with $x,y\in (0,1)$. We note that, in $(0,1)\times(0,1)$, $n_1(x,y)$ does not have any critical point. As
\begin{equation*}
    \dfrac{\partial n_1}{\partial y}=\dfrac{(1-x^2)(x-1)(x-5)y}{1440}\neq 0\quad x,y\in (0,1).
\end{equation*}

\noindent \underline{On $p=2$}
\begin{equation}
    D(2,x,y):=\dfrac{1109}{6635520}\approx 0.000167131,\quad x,y\in (0,1).\label{7b2 c9.3}
\end{equation}
\noindent \underline{On $x=0$}
\begin{equation}
    n_2(p,y):=\dfrac{1109 p^6 +(4-p^2) (13632 p^3 y + 46080 (4-p^2) y^2 + 
 13824 p^2(1- y^2))}{42467328}\label{7b2 c9.1}
\end{equation}
with $p\in (0,2)$ and $y\in (0,1).$ We solve $\partial n_2/\partial p$ and $\partial n_2/\partial y$ to locate the points of maxima. On solving $\partial n_2/\partial y=0,$ we obtain
\begin{equation}
    y=\dfrac{71p^3}{48(13p^2-40)}(=:\epsilon).\label{7b2 cy}
\end{equation}
For the given range of $y$, we should have $\epsilon\in (0,1)$ which is possible only when $p>p_0:\approx 1.99513 \in (0,2)$ exists. On computations, $\partial n_2/\partial p=0$ gives
\begin{equation}
   18432 p - 9216 p^3 + 1109 p^5 + 27264 p^2 y - 11360 p^4 y - 
 141312 p y^2 + 39936 p^3 y^2=0.\label{7b2 c9}
\end{equation}
On substituting (\ref{7b2 cy}) in (\ref{7b2 c9}), we get,
\begin{equation}
    88473600 p - 101744640 p^3 + 38582784 p^5 - 5470944 p^7 + 169065 p^9=0.\label{7b2 c40}
\end{equation}
The solution of (\ref{7b2 c40}) in the interval $(0,2)$ is $p\approx 1.34821$, according to a numerical calculation. In $(0,2)\times(0,1)$, $n_2$ does not have a critical point.\\
\noindent \underline{On $x=1$}
\begin{equation}
    n_3(p,y):=D(p,1,y)=\dfrac{398592p^2 - 121056 p^4 + 6461 p^6}{424673280}, \quad p\in (0,2).\label{7b2 c9.2}
\end{equation}
And when computing $\partial n_3/\partial p=0$, $p=:p_0\approx1.39681$ turns out to be the critical point. According to elementary calculations, $n_3$ reaches its  maximum value $\approx 0.000859123$ at $p_0.$\\
\noindent \underline{On $y=0$} 
\begin{align*}
    n_4(p,x):&=\dfrac{1}{424673280}\bigg(884736 x (1-x^2) + 
 2304 p^2 (24 - 192 x + 109 x^2 + 216 x^3 + 16 x^4)\\
    &\quad \quad\quad \quad\quad \quad\quad -96 p^4 (144 - 675 x + 880 x^2 + 720 x^3 + 192 x^4) \\
    &\quad \quad\quad \quad \quad\quad\quad+ 
 p^6 (1109 - 2376 x + 5424 x^2 + 2304 x^4) \bigg)=D(p,x,0).
\end{align*}
On computations,
\begin{align*}
    \dfrac{\partial n_4}{\partial x}&=\dfrac{(4-p^2)}{17694720}\bigg(9216 (1-3 x^2) - 48 p^2 (48- 109 x- 180 x^2 - 32 x^3)\\
     &\quad \quad\quad \quad \quad \quad+p^4 (99 -452 x - 384 x^3)\bigg)
\end{align*}
and \begin{align*}
    \dfrac{\partial n_4}{\partial p}&=\dfrac{1}{70778880}\bigg(768 p (24 - 192 x + 109 x^2 + 216 x^3 + 16 x^4)\\
    &\quad\quad\quad\quad\quad \quad-64 p^3 (144 - 675 x + 880 x^2 + 720 x^3 + 192 x^4)\\
    &\quad\quad\quad\quad\quad \quad+p^5 (1109 - 2376 x + 5424 x^2 + 2304 x^4)\bigg),
\end{align*}
we observe that only real solutions $(p,x)$ of the system of equations $\partial n_4/\partial x=0$ and $\partial n_4/\partial p=0$ are $(-2,-2.72495)$, $(-2.64507,-0.503718)$, $(-2.23927,-0.0103472)$, $(-2,-0.163482)$, $(2.23927,-0.0103472)$, $(-2.00568,-0.120347)$, $(2,-2.72495)$, $(-2,-0.0837889)$, $(0,-0.57735)$, $(2,-0.0837889)$, $(2.00568,-0.120347)$, $(2,-0.163482)$, $(2.64507,-0.503718)$ and\\ $(-0.91207,0.721981)$. Thus, no solution exists in $(0,2)\times (0,1)$, resulting in no critical points. 
 
\noindent \underline{On $y=1$}
\begin{align*}
n_5(p,x):&=\dfrac{1}{424673280}\bigg(36864 p x (3+4 x-3 x^2-4 x^3)+147456 (5- 4 x^2-x^4)\\
&\quad\quad \quad\quad\quad\quad- 2304 p^2 (160-24 x-261 x^2- 48 x^4)+ 
 768 p^3 (71 - 167 x^2 + 96 x^4) \\ 
&\quad\quad \quad\quad\quad\quad- 192 p^5 (71 + 36 x - 119 x^2 - 36 x^3 + 48 x^4)-96 p^4 (-480 + 45 x  \\
&\quad\quad\quad \quad\quad\quad + 1408 x^2 + 288 x^4) + 
 p^6 (1109 - 2376 x + 5424 x^2 + 2304 x^4)\bigg).
\end{align*}
On computations,
\begin{align*}
    \dfrac{\partial n_5}{\partial x}&=\dfrac{(4-p^2)}{17694720}\bigg(-6144 x (2 + x^2) + 384 p (3 + 8 x - 9 x^2 - 16 x^3)\\
     &\quad \quad\quad \quad \quad \quad+48 p^2 (12 + 197 x + 64 x^3)+ 16 p^3 (18 - 119 x - 54 x^2 + 96 x^3)\\
 &\quad \quad\quad \quad \quad \quad +p^4 (99 - 452 x -384 x^3)    \bigg)
\end{align*}
and \begin{align*}
    \dfrac{\partial n_5}{\partial p}&=\dfrac{1}{70778880}\bigg(6144 x (3+ 4 x - 3 x^2 - 4 x^3) + 
 768 p (-160 + 24 x + 261 x^2 + 48 x^4) \\
&\quad\quad\quad\quad\quad \quad-160 p^4 (71+ 36 x - 119 x^2 - 36 x^3 + 48 x^4) + 
 384 p^2 (71 - 167 x^2 + 96 x^4) \\
&\quad\quad\quad\quad\quad \quad+64 p^3 (480 -45 x- 1408 x^2 - 288 x^4) + 
 p^5 (1109 - 2376 x \\
&\quad\quad\quad\quad\quad \quad + 5424 x^2 + 2304 x^4)\bigg),
\end{align*}
we observe that only real solutions $(p,x)$ of the system of equations $\partial n_5/\partial x=0$ and $\partial n_5/\partial p=0$ are $(10.6873,0.22889)$, $(-2,-2.84595)$, $(2.70664,2.22228)$, $(2,2.03047)$, $(0,0)$, $(-2,-0.783981)$, $(2,-0.385354)$, $(-1.77349,0.0460014)$, $(2,-0.64512)$, $(1.98564,-0.516895)$, $(2.00568,-0.120347)$, $(2,0.657704)$, $(-2.33662,-1.5235)$, $(-1.03494,0.828224)$ and\\ $(1.36475,-1.03084)$. Thus, no solution exists in $(0,2)\times (0,1)$, resulting in no critical points.

\item Now, we calculate the maximum values achieved by $D(p,x,y)$ on the edges of the cuboid $T$.\\ From equation (\ref{7b2 c9.1}), we have $D(p,0,0)=h_1(p):=(55296 p^2 - 13824 p^4 + 1109 p^6)/424673280.$ It is easy to observe that $h_1'(p)=0$ for $p=\delta_0:=0$ and $p=1.83094$ in the interval $[0,2]$.\\ 
\begin{equation*}
    D(p,0,0)\leq 0.000169059.
\end{equation*}
Now considering (\ref{7b2 c9.1}) at $y=1,$ we get $D(p,0,1)=h_2(p):=(737280 - 368640 p^2 + 54528 p^3 + 46080 p^4 - 13632 p^5 + 1109 p^6)/424673280.$ It is easy to observe that $t_2'(p)$ is a decreasing function in $[0,2]$ and hence $p=0$ acts as its point of maxima. Thus
\begin{equation*}
    D(p,0,1)\leq \dfrac{1}{576}=0.00173611, \quad p\in [0,2].
    \end{equation*}
Through computations, (\ref{7b2 c9.1}) shows that $D(0,0,y)=y^2/576$, attains its maximum value at $y=1.$ This implies that
\begin{equation*}
    D(0,0,y)\leq \dfrac{1}{576}, \quad y\in [0,1].
\end{equation*}
Since, (\ref{7b2 c9.2}) is independent of $x$, we have $D(p,1,1)=D(p,1,0)=h_3(p):=(398592p^2 - 121056 p^4 + 6461 p^6)/424673280.$ Now, $h_3'(p)=294912 p - 175680 p^3 + 13206 p^5=0$ when $p=\delta_2:=0$ and $p=\delta_3:=1.39681$ in the interval $[0,2]$ with $\delta_2$ and $\delta_3$ as points of minima and maxima, respectively. Hence
\begin{equation*}
    D(p,1,1)=D(p,1,0)\leq 0.000859123,\quad p\in [0,2].
\end{equation*}
On substituting $p=0$ in (\ref{7b2 c9.2}), we get, $D(0,1,y)=0.$ The equation (\ref{7b2 c9.3}) is independent of the all the variables namely $p$, $x$ and $y$. Thus the maximum value of $D(p,x,y)$ on the edges $p=2, x=1; p=2, x=0; p=2, y=0$ and $p=2, y=1,$ respectively, is given by
\begin{equation*}
    D(2,1,y)=D(2,0,y)=D(2,x,0)=D(2,x,1)=\dfrac{1109}{6635520},\quad x,y\in [0,1].
\end{equation*}
From (\ref{7b2 c9.1}), we obtain $D(0,0,y)=y^2/576.$ A simple calculation shows that
\begin{equation*}
    D(0,0,y)\leq \dfrac{1}{576},\quad y\in [0,1].
    \end{equation*}
Using (\ref{7b2 c9.4}), we obtain $D(0,x,1)=h_4(x):=(5 - 4 x^2 - x^4)/2880.$ Upon calculations, we see that $h_4$ is a decreasing function in $[0,1]$ and hence attains its maximum value at $x=0.$ Thus
 \begin{equation*}
     D(0,x,1)\leq \dfrac{1}{576},\quad x\in [0,1].
 \end{equation*}
On again using (\ref{7b2 c9.4}), we get $D(0,x,0)=h_5(x):=x(1-x^2)/480.$ On further calculations, we get $t_5'(x)=0$ for $x=\delta_4:=1/\sqrt{3}.$ Also, $h_5(x)$ is an increasing function in $[0,\delta_4)$ and decreasing in $(\delta_4,1].$ So, it attains its maximum value at $\delta_4$. Thus
 \begin{equation*}
     D(0,x,0)\leq 0.000801875,\quad x\in [0,1].
 \end{equation*}
\end{enumerate}
In view of all the cases, the inequality (\ref{7b2 c9.5}) holds.
The function specified in (\ref{7b1 cextremal}) acts as an extremal function for the bounds of $|H_{2}(3)|$ having values $a_3=a_5=0$ and $a_4=1/24.$
\end{proof}

				%
\subsection*{Acknowledgment}
Neha Verma is thankful to the Department of Applied Mathematics, Delhi Technological University, New Delhi-110042 for providing Research Fellowship.


\begin{thebibliography}{99}
\bibitem{alarif} N. M. Alarifi, R. M. Ali\ and\ V. Ravichandran, On the second Hankel determinant for the $k$th-root transform of analytic functions, Filomat {\bf 31} (2017), no.~2, 227--245.
\bibitem{bulboca} D. Alimohammadi, E. Analouei Adegani, T. Bulboac{\u{a}}\ and\ N. E. Cho, Logarithmic coefficients for classes related to convex functions. Bull. Malays. Math. Sci. Soc., {\bf 44} (2021), no.~4, 2659--2673.
\bibitem{banga} S. Banga\ and\ S. S. Kumar, Sharp bounds of third Hankel determinant for a class of starlike functions and a subclass of $q$-starlike functions, Khayyam J. Math. {\bf 9} (2023), no.~1, 38--53.
\bibitem{sharp} S. Banga\ and\ S. S. Kumar, The sharp bounds of the second and third Hankel determinants for the class $\mathcal{SL}^*$, Math. Slovaca {\bf 70} (2020), no.~4, 849--862.
\bibitem{goodman vol1} A. W. Goodman, {\it Univalent functions}. Vol. I, Mariner Publishing Co., Inc., Tampa, FL, 1983.
\bibitem{kanas1} S. Kanas, Coefficient estimates in subclasses of the Carathéodory class related to conical domains. Acta Mathematica Universitatis Comenianae. New Series, {\bf 74} (2005), no.~2, 149-161.
\bibitem{4/9} B. Kowalczyk, A. Lecko\ and\ D. K. Thomas, The sharp bound of the third Hankel determinant for starlike functions, Forum Mathematicum. De Gruyter, (2022).
\bibitem{bellv} V. Kumar, N. E. Cho, V.  Ravichandran\ and\ H. M. Srivastava, Sharp coefficient bounds for starlike functions associated with the Bell numbers, Math. Slovaca {\bf 69} (2019), no.~5, 1053--1064.
\bibitem{nehastrip} S. S. Kumar\ and\ N. Verma, On a Subclass of Starlike Functions Associated with a Strip Domain. arXiv e-prints, pp.arXiv:2312.15266 (2023) (accepted in Ukrainian Mathematical Journal).
\bibitem{nehapetal} S. S. Kumar\ and\ N. Verma, Coefficient problems for starlike functions associated with a petal shaped domain. arXiv e-prints, pp.arxiv:2210.01435 (2022).
\bibitem{poojabean} S. S. Kumar\ and\ P. Yadav, On Starlike Functions Associated with a Bean Shaped Domain. arXiv e-prints, pp.arXiv:2403.14162 (2024).
\bibitem{lemma1} O. S. Kwon, A. Lecko\ and\ Y. J. Sim, On the fourth coefficient of functions in the Carath\'{e}odory class, Comput. Methods Funct. Theory {\bf 18} (2018), no.~2, 307--314.
\bibitem{rj} R. J. Libera\ and\ E. J. Z\l otkiewicz, Early coefficients of the inverse of a regular convex function, Proc. Amer. Math. Soc. {\bf 85} (1982), no.~2, 225--230.
\bibitem{ma-minda} W. C. Ma\ and\ D. Minda, A unified treatment of some special classes of univalent functions, Proc. Confer. Complex Anal. (Tianjin, 1992), 157--169.
\bibitem{bulboca2}K.  Marımuthu, J. Uma\ and\ T. Bulboac{\u{a}}, Coefficient estimates for starlike and convex functions associated with cosine function. Hacett. J. Math. Stat., {\bf 52} (2023), no.~3, 596-618.
\bibitem{pomi} C. Pommerenke, On the coefficients and Hankel determinants of univalent functions, J. London Math. Soc., {\bf 41} (1966), 111–-122.
\bibitem{poko} D. V. Prokhorov\ and\ J. Szynal, Inverse coefficients for $(\alpha,\beta )$-convex functions, Ann. Univ. Mariae Curie-Sk\l odowska Sect. A {\bf 35} (1981), 125--143 (1984).
\bibitem{shelly} V. Ravichandran\ and\ S. Verma, Bound for the fifth coefficient of certain starlike functions, C. R. Math. Acad. Sci. Paris {\bf 353} (2015), no.~6, 505--510.
\bibitem{nehacardioid} N. Verma\ and\ S. S. Kumar, A Conjecture on $H_3(1)$ for Certain Starlike Functions. Math. Slovaca, {\bf 73} (2023), no.~5, 1197--1206.
\bibitem{nehaexpo} N. Verma\ and\ S. S. Kumar, Certain Coefficient Problems of $\mathcal{S}^{*}_{e}$ and $\mathcal{C}_{e}$. arXiv e-prints, pp.arxive:2208.14644 (2022).










					
					
					
					
					
\end{thebibliography}
\end{document}